\newtheorem{theorem}{Theorem}[section]
\newtheorem{lemma}[theorem]{Lemma}
\newtheorem{cor}[theorem]{Corollary}
\theoremstyle{definition}
\newtheorem{example}[theorem]{Example}
\theoremstyle{remark}
\newtheorem{remark}[theorem]{Remark}
\numberwithin{equation}{section}
\newcommand{\abs}[1]{\left\lvert#1\right\rvert}
\newcommand{\norm}[1]{\left\lVert#1\right\rVert}
\newcommand{\ds}{\displaystyle}
\newcommand{\CI}{\mathcal{I}}
\newcommand{\CJ}{\mathcal{J}}
\newcommand{\CU}{\mathcal{U}}
\begin{document}

\title[Distribution of Values of Short Hybrid Exponential Sums]
{The Distribution of Values of Short Hybrid Exponential Sums on Curves over Finite Fields}

\author{Kit-Ho Mak}
\address{Department of Mathematics \\
University of Illinois at Urbana-Champaign \\
273 Altgeld Hall, MC-382 \\
1409 W. Green Street \\
Urbana, Illinois 61801, USA}
\email{mak4@illinois.edu}

\author{Alexandru Zaharescu}
\address{Department of Mathematics \\
University of Illinois at Urbana-Champaign \\
273 Altgeld Hall, MC-382 \\
1409 W. Green Street \\
Urbana, Illinois 61801, USA}
\email{zaharesc@math.uiuc.edu}

\subjclass[2010]{Primary 11G20, 11T23, 11T24}
\keywords{Gaussian distribution, exponential sums}

\thanks{The second author is supported by NSF grant number DMS - 0901621.}

\begin{abstract}
Let $p$ be a prime number, $X$ be an absolutely irreducible affine plane curve over $\mathbb{F}_p$, and $g,f\in\mathbb{F}_p(x,y)$. We study the distribution of the values of the hybrid exponential sums
\begin{equation*}
S_n = \sum_{\substack{P_i\in X, n<x(P_i)\leq n+H \\ y(P_i)\in\mathcal{J}}}\chi(g(P_i))\psi(f(P_i))
\end{equation*}
on $n\in\mathcal{I}$ for some short interval $\mathcal{I}$. We show that under some natural conditions the limiting distribution of the projections of the sum $S_n$, $n\in\mathcal{I}$ on any straight line through the origin is Gaussian as $p$ tends to infinity.
\end{abstract}

\maketitle

\section{Introduction}

Many sequences that arise in number theory have Gaussian distribution. A well-known family of sequences with Gaussian distribution can be obtained by Erd\"{o}s-Kac type results \cite{ErKa40} (see also \cite{GrSo07} for a more complete and recent account). For example, the number of distinct prime factors of an integer $n$ \cite{ErKa40}, of $\phi(n)$ \cite{ErPo85}, of the sum $a+b$ when $a$ and $b$ are given in some dense set \cite{EMS87}, of the number of points on an elliptic curve \cite{Liu06}, of the characteristic polynomial of the Frobenius acting on Drinfeld modules \cite{KuLi09}, and of polynomials of several variables \cite{Xio09} are all with Gaussian distribution. Another example that falls into this type is the $2$-rank of the Selmer groups of certain $2$-isogenies of some families of elliptic curves \cite{XiZa08, XiZa09}. A well-known unpublished result of Selberg on the distribution of values of Riemann-Zeta function $\zeta(s)$ on the critical line offers another type of Gaussian distribution result. In this paper we will present another family of sequences arising naturally in number theory, which have a Gaussian distribution, but do not fall into the types mentioned above.

In \cite{DaEr52}, Davenport and Erd\"{o}s studied the distribution of quadratic residues and non-residues. As a result they proved that the limiting distribution of the values of the incomplete character sum
\begin{equation} \label{chsum}
S_n=\sum_{n<x\leq n+H}\chi(x),
\end{equation}
where $\chi$ is the quadratic character modulo a large prime $p$, is Gaussian after a suitable normalization. More precisely, they showed that the number $N_p(\lambda)$ of integers $n\in\{0,\ldots,p-1\}$ for which $S_n\leq \lambda H^{\frac{1}{2}}$ satisfies
\begin{equation*}
\lim_{p\rightarrow\infty}\frac{N_p(\lambda)}{p}=\frac{1}{\sqrt{2\pi}}\int_{-\infty}^{\lambda} e^{-\frac{t^2}{2}}\,dt
\end{equation*}
for any fixed $\lambda$, if $H$ satisfies the growth conditions
\[\ds H\rightarrow\infty, \frac{\log{H}}{\log{p}}\rightarrow 0 \]
as $p$ tends to infinity.

In \cite{CCZ03}, the result of Davenport and Erd\"{o}s is generalized to the case of an $n$-dimensional sum of quadratic characters of the form
\begin{equation*}
S_H(x_1,\ldots,x_n)=\sum_{x_1<z_1\leq x_1+H}\cdots\sum_{x_n<z_n\leq x_n+H}\chi(z_1+\ldots+z_n)\,.
\end{equation*}
 In this paper, we will generalize the result of Davenport and Erd\"{o}s in another direction by regarding the sum $S_n$ in \eqref{chsum} as a special example in a more general class of incomplete hybrid exponential sums over an absolutely irreducible affine plane algebraic curve $X$ over the finite field $\mathbb{F}_p$,
\begin{equation}\label{def:Sn}
S_n = \sum_{\substack{P_i\in X, n<x(P_i)\leq n+H \\ y(P_i)\in\CJ}}\chi(g(P_i))\psi(f(P_i)).
\end{equation}
Here $\chi$ is a multiplicative character of $\mathbb{F}_p$, $\psi$ is an additive character of $\mathbb{F}_p$, $\CJ$ an interval, and $g,f \in\mathbb{F}_p(x,y)$ are rational functions. The sum \eqref{chsum} considered in \cite{DaEr52} corresponds to the case when $\chi$ is the quadratic character, $\psi$ is the trivial character, $X$ the affine line defined by $y=0$, and $g(x,y)=x$. In this paper, we prove that the limiting distribution of the values of most of these incomplete hybrid exponential sums is also Gaussian.

\section{Statements of Main Results}

Let $p$ be a large prime, and $X$ be an absolutely irreducible affine plane curve over $\mathbb{F}_p$, given by the equation $P(x,y)=0$, with $\deg_y(P(x,y))\geq 1$, where $\deg_y$ denotes the degree in $y$. Let $\chi,\psi$ be a \textit{nontrivial} multiplicative character and a \textit{nontrivial} additive character modulo $p$ respectively, $f,g\in\mathbb{F}_p(x,y)$ be two rational functions.

Let $\CJ=[\alpha p,\beta p)$ be an interval, where $0\leq \alpha < \beta \leq 1$. For simplicity, we assume that no two points on $X$ with their $y$-coordinates in $\CJ$ have the same $x$-coordinates. If $r$ denotes the number of $\mathbb{F}_p$-points on $X$, we let $P_1,\dots,P_r$ be the points on $X$ with their $y$-coordinates in $\CJ$, ordered by their $x$-coordinates in ascending order. We also let $H$ be an integer such that $1\leq H \leq p$, and $\CI\subseteq[0,p-1]$ an interval. Since $\mathbb{F}_p$-points on an affine curve is uniformly distributed (see for example Meyerson \cite{Mey81}, Fujiwara \cite{Fuj85}, or the authors \cite{MaZa10}), we have the following estimation of $r$,
\begin{equation*}
r = (\beta-\alpha)p + O(\sqrt{p}\log^2{p}).
\end{equation*}
More generally, the number of points $N$ on $X$ inside the rectangle $(n,n+H]\times\CJ$ is given by
\begin{equation}\label{est:N}
N = (\beta-\alpha)H + O(\sqrt{p}\log^2{p}),
\end{equation}
where $\abs{\CI}$ denotes the number of integers in $\CI$.

We are interested in the distribution of the values of the hybrid exponential sums \eqref{def:Sn} for $n\in\CI$ as $p$ tends to infinity. It is understood that the poles of $f,g$ are excluded from the sum.

We will show that the projections of $S_n$ on any fixed straight line through the origin are Gaussian. More precisely, fix an angle $\theta\geq0$ and consider the line $L_{\theta}$ given by the equation $y=x\tan\theta$. Let $p$, $\chi$, $\psi$, $\CI,\CJ$, $f,g$ as above, we form the exponential sums $S_n$ as in \eqref{def:Sn} for $n\in\CI$, and study its projection $u_n$ on $L_{\theta}$, normalized by the asymptotic number of points we sum, namely $((\beta-\alpha)H)^{\frac{1}{2}}$ by \eqref{est:N}. i.e.
\begin{equation}\label{def:un}
u_n = \frac{S_ne^{-i\theta}+\overline{S_n}e^{i\theta}}{2((\beta-\alpha)H)^{\frac{1}{2}}},
\end{equation}
for $n\in\CI$, and consider the sequence $\{u_n:n\in\CI\}$ on $L_{\theta}$. We will show that as $H$ and $p$ tends to infinity, the limiting distribution of the $u_n$ is Gaussian. The idea is to calculate the moments
\begin{equation}\label{def:Mk}
M_k = M_k(p,\chi,\psi,f,g,H,\CI,\theta) = \sum_{n\in\CI}u_n^k
\end{equation}
for $k\in\mathbb{N}$. Our result is the following.

\begin{theorem}\label{mainthm}
Let $p$, $X$, $\chi$, $\psi$, $\CI$, $\CJ$, $H$ be as above. Let $g,f\in\mathbb{F}_p(x,y)$ be two rational functions. Define $d_g,d_f$ to be the degrees of the denominators of $g$ and $f$ respectively. Suppose $f$ is not of the form
\[h^p-h+\text{(linear terms)}+Q(x,y)P(x,y)^b\]
for any nonzero integer $b$, rational functions $h\in\overline{\mathbb{F}}_p(x,y), Q\in\mathbb{F}_p(x,y)$, with $Q$ relatively prime to $P$, and any constant $C\in\mathbb{F}_p$ (in this paper, all the ``linear terms'' have coefficients in $\mathbb{F}_p$). Let $f=\frac{f_1}{f_2}$, with $f_1,f_2\in\mathbb{F}_p[x,y]$, and $f_1,f_2$ have no common factors, we also assume that
\begin{enumerate}
\item if $f$ is a polynomial, then $\deg{f}<p$. Write $f(x,y)=r_1(x)+r_2(x,y)$, where $r_1$ consists of all terms which do not depend on $y$. We further assume that either
    \begin{enumerate}
    \item $r_2$ is not of the form
    \[\text{(linear terms)}+Q(x,y)P(x,y)^b\]
    for any nonzero integer $b$, rational function $Q$ relatively prime to $P$, or
    \item if $r_2$ is of the above form, then $\deg{r_1}\geq 3$.
    \end{enumerate}
\item if $\deg{f_2}\geq 1$ (so that $f$ is not a polynomial), then $f_2$ is not a constant multiple of the $p$-th power of any polynomial in $\overline{\mathbb{F}}_p[x,y]$.
\end{enumerate}

Let $k$ be a positive integer, $H$, $k$ be small compared to $p$ (say $H,k=O(\log{p})$). Then if $k$ is odd, we have
\begin{equation}\label{Mkodd}
M_k = O(H^{\frac{k}{2}}(d^{2k}+2d^{k}d_g^{k}+2d^{k}d_f^{k})\sqrt{p}\log^{2k}{p}),
\end{equation}
and if $k$ is even,
\begin{multline}\label{Mkeven}
M_k = \frac{1}{2^k}\binom{k}{k/2}(k/2)!\abs{\CI}(1+O(k^2/H)) \\
 +O(2^{\frac{k}{2}}H^{\frac{k}{2}}(d^{2k}+2d^{k}d_g^{k}+2d^{k}d_f^{k})\sqrt{p}\log^{2k}{p}).
\end{multline}
\end{theorem}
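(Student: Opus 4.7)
The natural approach is a direct moment computation: establish that the $M_k$ agree with the moments of a centred Gaussian up to controllable error, then invoke the method of moments. I begin by expanding $u_n^k$ via the binomial theorem,
\[u_n^k = \frac{1}{(2((\beta-\alpha)H)^{1/2})^k}\sum_{j=0}^k\binom{k}{j}e^{i(k-2j)\theta}S_n^{j}\overline{S_n}^{k-j},\]
and opening each $S_n^{j}\overline{S_n}^{k-j}$ as a $k$-fold sum over ordered tuples $(P^{(1)},\ldots,P^{(k)})$ on $X$ with $y$-coordinates in $\CJ$ and $x$-coordinates in $(n,n+H]$, weighted by $\chi(g(P^{(1)})\cdots g(P^{(j)})/(g(P^{(j+1)})\cdots g(P^{(k)})))$ and $\psi\bigl(\sum_{s\leq j} f(P^{(s)})-\sum_{s>j} f(P^{(s)})\bigr)$. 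Summing over $n\in\CI$ and interchanging orders of summation, each $k$-tuple acquires a weight $A\leq H$ equal to the number of admissible $n$'s; one has $A=\max(0,H-D)+O(1)$ where $D$ is the diameter of $\{x(P^{(s)})\}_s$, so $A$ is essentially $H$ when the points cluster and vanishes when they spread out.

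Next, split the resulting sum into a diagonal part and a non-diagonal part. Diagonality means that the product of $\chi$- and $\psi$-values is (essentially) the trivial character on $X^k$; under the hypotheses of the theorem this forces the pairing $\{P^{(1)},\ldots,P^{(j)}\}=\{P^{(j+1)},\ldots,P^{(k)}\}$ as multisets, which is possible only for $j=k/2$, hence only for even $k$. For even $k$ there are $(k/2)!$ such pairings; each contributes roughly $N\cdot H$ via \eqref{est:N} after summing the weight $A$ over $n\in\CI$, and, with the binomial factor $\binom{k}{k/2}$, the trivial phase $e^{i(k-2j)\theta}=1$, and the normalization $(2((\beta-\alpha)H)^{1/2})^{-k}$, one recovers the main term $\frac{1}{2^k}\binom{k}{k/2}(k/2)!|\CI|$. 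Secondary $O(k^2/H)$ corrections arise from coincidences inside a pairing and from the $O(1)$ boundary error in $A$. For odd $k$ no diagonal contribution exists and $M_k$ is entirely error.

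The non-diagonal part is bounded by completing the indicator of $(n,n+H]$ in Polya--Vinogradov fashion, thereby reducing to complete hybrid character sums of the shape
\[\sum_{\substack{P^{(1)},\ldots,P^{(k)}\in X\\ y(P^{(s)})\in\CJ}}\chi(G)\psi(F+\text{linear terms in }x)\]
on the $k$-fold product of $X$. A Weil/Bombieri/Deligne-type bound furnishes square-root cancellation of order $d^{O(k)}\sqrt{p}$, provided that neither the $\chi$-character nor the $\psi$-character becomes trivial on $X^k$; the technical hypotheses on $f$ (the exclusion of the shape $h^p-h+\text{linear}+QP^b$, together with the conditions on $f_1,f_2$ and on $r_1,r_2$ in the polynomial case) are precisely what guarantees this nondegeneracy for the combinations $F$ arising in the non-diagonal tuples. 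Multiplying by the $(\log p)^k$ factors from the completion and by $H^{k/2}$ from the normalization yields the error terms in \eqref{Mkodd} and \eqref{Mkeven}. The main obstacle is exactly this last step: verifying that across all non-diagonal $k$-tuples, possibly with repeated entries and with various $\pm$ patterns of sign, the combined $\psi(F)$ avoids the forbidden form on $X^k$ modulo $p$-th powers and the defining polynomial $P$. This combinatorial/geometric check is delicate and is what dictates the exact shape of the hypotheses in the statement.
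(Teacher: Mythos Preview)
Your overall architecture matches the paper's: expand $u_n^k$ binomially into the sums $S(j,k-j)=\sum_{n\in\CI}S_n^{j}\overline{S_n}^{k-j}$, isolate the ``diagonal'' contribution (only for $j=k/2$, via a pairing forced by the hypotheses on $f$), and bound everything else by Weil-type estimates.  The delicate nondegeneracy check you flag at the end is exactly the paper's Lemma~\ref{lem:f}.  But there is a genuine gap in your non-diagonal estimate.

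You say that after completion one faces complete hybrid sums ``on the $k$-fold product of $X$'' with ``square-root cancellation of order $d^{O(k)}\sqrt{p}$''.  The $k$-fold product $X^k$ is $k$-dimensional, and the Weil--Deligne bound there is $O(p^{k/2})$, not $O(\sqrt{p})$.  Even using that $G$ and $F$ separate in the $P^{(s)}$, so that the complete sum factors as a product of $k$ sums over $X$ each of size $O(\sqrt{p})$, the product is still $O(p^{k/2})$; after the $\log^{k}p$ from completion and the remaining sum over $n\in\CI$ this swamps the main term for every $k\ge 2$.  Your handling of the $n$-sum is also inconsistent: you first sum over $n$ to produce the weight $A$, and then in the non-diagonal part you propose to complete in $n$ again.

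The missing device, and the reason the error term in the theorem carries only a single $\sqrt{p}$, is the paper's \emph{$x$-shifted curve}.  One reparametrizes the tuple by $(n,h_1,\dots,h_k)$ with $x(P^{(s)})=n+h_s$, $1\le h_s\le H$.  For each fixed $\mathbf h$ (at most $H^k$ choices), the points $(n,y_1,\dots,y_m)$ with $P(n+u_i,y_i)=0$ (one $y$-variable per distinct value $u_i$ among the $h_s$) lie on a \emph{curve} $X_{\CU_{\mathbf h}}\subset\mathbb A^{m+1}$, and the sum over $n\in\CI$ with $y_i\in\CJ$ is an incomplete hybrid sum along this one-dimensional object.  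Perel'muter's curve bound (Lemma~\ref{lem:perel}) then gives genuine $\sqrt{p}$ cancellation; the $H^k$ from the outer sum over $\mathbf h$, divided by the normalization $H^{k/2}$, yields the $H^{k/2}\sqrt p\log^{2k}p$ shape.  The same parametrization makes the diagonal count transparent: Lemma~\ref{lem:f} says $\tilde f_{\mathbf h}$ is degenerate only when it vanishes identically, i.e.\ when $(h_1,\dots,h_j)$ is a permutation of $(h_{j+1},\dots,h_{2j})$, giving $j!H^{j}(1+O(j^2/H))$ diagonal $\mathbf h$'s each contributing $|\CI|(\beta-\alpha)^{j}$.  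Your ``$N\cdot H$'' accounting does not reproduce this.
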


The main term in \eqref{Mkeven} is
\begin{equation*}
\frac{1}{2^k}\binom{k}{k/2}(k/2)!\abs{\CI}=2^{-\frac{k}{2}}\cdot 1 \cdot 3 \cdot \ldots \cdot (k-1)\cdot\abs{\CI}.
\end{equation*}
As in Davenport and Erd\"{o}s \cite{DaEr52}, we write
\begin{equation*}
\mu_k=\begin{cases}
1 \cdot 3 \cdot \ldots \cdot (k-1), & \text{if $k$ is even},\\
0, & \text{if $k$ is odd}.
\end{cases}
\end{equation*}
Then from Theorem \ref{mainthm}, the next corollary follows immediately.

\begin{cor}\label{maincor}
Let $p$, $X$, $\chi$, $\psi$, $\CI$, $\CJ$, $H$, $g$, $f$ be as above. Suppose that $f$ is not of the form
\[\text{(linear terms)}+Q(x,y)P(x,y)^b\]
for nonzero integer $b$, $Q\in\mathbb{F}_p(x,y)$ relatively prime to $P(x,y)$, and subject to the conditions
\begin{enumerate}
\item $f$ is not a polynomial, or
\item $f$ is a polynomial, write $f(x,y)=r_1(x)+r_2(x,y)$, where $r_1$ consists of all terms which do not depend on $y$. We assume that either
    \begin{enumerate}
    \item $r_2$ is not of the form
    \[\text{(linear terms)}+Q(x,y)P(x,y)^b\]
    for any nonzero integer $b$, rational function $Q$ relatively prime to $P$, or
    \item if $r_2$ is of that form, then $\deg{r_1}\geq 3$.
    \end{enumerate}
\end{enumerate}

Suppose in addition that $H$ is any function of $p$ that tends to infinity with $p$ subjected to the following conditions:
\begin{gather*}
1 \leq H \leq p, \\
\lim_{p\rightarrow\infty}\frac{\log{H}}{\log{p}} =\lim_{p\rightarrow\infty}\frac{\log{d}}{\log{p}} =\lim_{p\rightarrow\infty}\frac{\log{d_g}}{\log{p}} =\lim_{p\rightarrow\infty}\frac{\log{d_f}}{\log{p}}=0, \\
\liminf_{p\rightarrow\infty}\frac{\log{\abs{\CI}}}{\log{p}}>\frac{1}{2}.
\end{gather*}
Then we have
\begin{equation*}
\lim_{p\rightarrow\infty}\frac{2^{k/2}M_k}{\abs{\CI}}=\mu_k.
\end{equation*}
\end{cor}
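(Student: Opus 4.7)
The plan is to derive the corollary as a direct consequence of Theorem~\ref{mainthm} by dividing the estimates \eqref{Mkodd} and \eqref{Mkeven} through by $\abs{\CI}/2^{k/2}$ and then checking that (i) the main term in the even case matches $\mu_k$, and (ii) every error term vanishes as $p\to\infty$ under the hypotheses on $H$, $d$, $d_g$, $d_f$, and $\abs{\CI}$.

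First I would handle the main-term identification. Multiplying the first term on the right-hand side of \eqref{Mkeven} by $2^{k/2}/\abs{\CI}$ gives
\[
\frac{1}{2^{k/2}}\binom{k}{k/2}(k/2)!\bigl(1+O(k^2/H)\bigr)=\frac{k!}{2^{k/2}(k/2)!}\bigl(1+O(k^2/H)\bigr),
\]
and the standard double-factorial identity $k!/(2^{k/2}(k/2)!)=1\cdot 3\cdots(k-1)=\mu_k$ for even $k$ identifies this with $\mu_k$. Since $k$ is held fixed while $H\to\infty$ with $p$, the factor $1+O(k^2/H)$ tends to $1$. For odd $k$ there is no main term at all, which is consistent with $\mu_k=0$, so only the error estimate needs handling in that case.

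The remaining step is to show that the big-$O$ contribution in \eqref{Mkodd} or \eqref{Mkeven}, once multiplied by $2^{k/2}/\abs{\CI}$, tends to zero. This produces a bound of the shape
\[
O\!\left(\frac{2^{k}H^{k/2}\bigl(d^{2k}+2d^{k}d_g^{k}+2d^{k}d_f^{k}\bigr)\sqrt{p}\,\log^{2k}p}{\abs{\CI}}\right).
\]
For fixed $k$ the factor $2^k$ is a constant, and by the hypotheses $\log H/\log p\to 0$, $\log d/\log p\to 0$, $\log d_g/\log p\to 0$, $\log d_f/\log p\to 0$, every one of $H^{k/2}$, $d^{2k}$, $d_g^{k}$, $d_f^{k}$, $\log^{2k}p$ is $p^{o(1)}$, so the numerator is $p^{1/2+o(1)}$. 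The assumption $\liminf\log\abs{\CI}/\log p>1/2$ then gives $\abs{\CI}\gg p^{1/2+\delta}$ for some fixed $\delta>0$ once $p$ is large, and the quotient tends to zero.

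Beyond these two steps the argument is just asymptotic bookkeeping; there is no serious obstacle since the quantitative work was absorbed into Theorem~\ref{mainthm}. The only non-routine observation is the collapse of the combinatorial prefactor $\frac{1}{2^{k}}\binom{k}{k/2}(k/2)!$ to $\mu_k/2^{k/2}$. I would also briefly verify that the $f$ excluded by the hypotheses of the corollary corresponds precisely to the admissible class singled out in Theorem~\ref{mainthm}, matching its two subcases according to whether $f$ is a polynomial, so that \eqref{Mkodd} and \eqref{Mkeven} may indeed be invoked as input.
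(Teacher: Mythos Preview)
Your proposal is correct and matches the paper's approach: the paper states that Corollary~\ref{maincor} ``follows immediately'' from Theorem~\ref{mainthm}, and just before stating the corollary it records the same combinatorial identification $\frac{1}{2^k}\binom{k}{k/2}(k/2)!\abs{\CI}=2^{-k/2}\mu_k\abs{\CI}$ that you use. Your filling-in of the error-term analysis (numerator $=p^{1/2+o(1)}$, denominator $\gg p^{1/2+\delta}$) is exactly the intended routine verification.
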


From this asymptotic behaviour of the moments, one can deduce that the distribution of our sums $S_n$ tends to the Gaussian distribution on $L_{\theta}$ as $p$ tends to infinity. We will give the argument in Section \ref{pfcorGD}.

\begin{cor}\label{corGD}
Suppose that the hypotheses of Corollary \ref{maincor} are satisfied. Then for any $\lambda\geq 0$, the number $G_p(\lambda)$ of integers $n\in\CI$ with $u_n\leq\lambda$ satisfies
\begin{equation*}
\lim_{p\rightarrow\infty}\frac{G_p(\lambda)}{\abs{\CI}}=\frac{1}{\sqrt{\pi}}\int_{-\infty}^{\lambda}e^{-t^2}\,dt.
\end{equation*}
\end{cor}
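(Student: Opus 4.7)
The plan is to deduce the distributional statement from the moment asymptotics of Corollary~\ref{maincor} by the method of moments. The first step is to match the target: the density $\frac{1}{\sqrt{\pi}}e^{-t^2}$ is that of a centered Gaussian with variance $1/2$, and a direct computation shows that its $k$-th moment equals $2^{-k/2}\mu_k$ (namely $0$ for $k$ odd and $(k-1)!!/2^{k/2}$ for $k$ even). So Corollary~\ref{maincor} says precisely that the empirical measures
\begin{equation*}
\nu_p = \frac{1}{\abs{\CI}}\sum_{n\in\CI}\delta_{u_n}
\end{equation*}
have $k$-th moments converging, for every fixed $k$, to those of $\mathcal{N}(0,1/2)$.

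The cleanest route is to invoke the Fr\'echet--Shohat theorem: since the Gaussian is uniquely determined by its moments (Carleman's criterion being trivially satisfied), convergence of moments implies weak convergence $\nu_p\Rightarrow\mathcal{N}(0,1/2)$. Because the Gaussian CDF is continuous everywhere, weak convergence is equivalent to pointwise convergence of the distribution functions, which is exactly the statement of Corollary~\ref{corGD}.

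For a self-contained argument in the spirit of Davenport and Erd\"os~\cite{DaEr52}, I would proceed directly. Fix $\lambda$ and $\epsilon>0$, and choose $R$ large. By Weierstrass approximation, pick polynomials $P_{\pm}$ of some degree $k_0$ with $P_{-}\leq\chi_{(-\infty,\lambda]}\leq P_{+}$ on $[-R,R]$ and $\int_{-R}^{R}(P_{+}-P_{-})\frac{e^{-t^2}}{\sqrt{\pi}}\,dt<\epsilon$. Linearity and the moment asymptotics evaluate $\abs{\CI}^{-1}\sum_{n\in\CI}P_{\pm}(u_n)$ to within $o(1)$ of $\int P_{\pm}\frac{e^{-t^2}}{\sqrt{\pi}}\,dt$ as $p\to\infty$, while the exceptional points $\abs{u_n}>R$ are controlled by a Chebyshev-type bound
\begin{equation*}
\#\{n\in\CI:\abs{u_n}>R\}\leq R^{-2k}\sum_{n\in\CI}u_n^{2k}\ll R^{-2k}\abs{\CI}
\end{equation*}
for any even exponent $2k$, which also tends to $0$ upon letting $R\to\infty$.

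The main (minor) obstacle is sequencing the three limits $p\to\infty$, $k_0\to\infty$, $R\to\infty$ so that the polynomial-approximation error and the tail error are simultaneously negligible; since Corollary~\ref{maincor} is uniform in any fixed $k$, the standard ordering (first fix $R$ and $k_0$, let $p\to\infty$, then $R,k_0\to\infty$) works without complication, and one obtains the desired identity $\lim_{p\to\infty}G_p(\lambda)/\abs{\CI}=\frac{1}{\sqrt{\pi}}\int_{-\infty}^{\lambda}e^{-t^2}\,dt$ for every $\lambda$.
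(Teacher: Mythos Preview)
Your proposal is correct and follows essentially the same route as the paper: both derive the result from Corollary~\ref{maincor} via the method of moments, invoking the fact that the Gaussian is determined by its moments (the paper phrases this through Riemann--Stieltjes integrals and cites the moment problem from Feller~\cite{Fel68}, while you use the language of empirical measures and Fr\'echet--Shohat/Carleman). Your optional self-contained argument via Weierstrass approximation and Chebyshev tail bounds is a valid elaboration but not needed beyond what the paper already sketches.
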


Several remarks about the distribution of the sum $S_n$ are in order.

\begin{remark}
If $\CJ$ is not chosen so that we have a one-to-one correspondence of $x$ and $y$-coordinates on a curve, we may still have Gaussian distribution for the $S_n$. For example, if $X$ is a hyperelliptic curve, and we choose $\CJ$ to be the whole interval $[0,p)$, then generically one $x$-coordinate on the curve corresponds to two $y$-coordinates. From Corollary \ref{corGD}, we have Gaussian distribution for $\CJ_1=[0,p/2)$, and also for $\CJ_2=[-p/2,0)$. After combining the two of them we will have Gaussian distribution for the whole interval $\CJ=[-p/2,p/2)$.
\end{remark}

\begin{remark}
Corollary \ref{corGD} is in some sense best possible with respect to the range of $\abs{\CI}$, and $f$ has to be non-linear. This is illustrated in the following examples.
\end{remark}

\begin{example}
Let $X$ be the diagonal defined by $x=y$, $\chi$ the quadratic character and $\psi(x)=e_p(x)$, where $e_p(x)=e^{2\pi ix/p}$. Let $g(x,y)=x, f(x,y)=xy$. All the assumptions on $\chi, \psi, g, f$ are satisfied, so we can conclude from Corollary \ref{corGD} the Gaussian distribution of the hybrid exponential sum if $\abs{\CI}>p^{\frac{1}{2}}$. However, if we let $\CI=\{1,\ldots,N\}$, with $N\sim p^{\frac{1}{2}-\varepsilon}$ and $H<p^{\frac{1}{2}-\varepsilon}$, then if $p$ is large enough, $e_p(xy)\sim 1$. Since $\chi(x)$ is real, the sum $S_n$ will be close to a real number for any $n\in\CI$. Thus their projections to the imaginary axis will not have Gaussian distribution.
\end{example}

\begin{example}
On the other hand, if $X, \chi, \psi$ is as above, and let $g(x,y)=x$, and $f(x,y)=x+y$ is linear. Let $\CI=\{1,\ldots,N\}$ with $N, H = o(p)$ but $N>p^{\frac{1}{2}}$. Then again $e_p(x+y)\sim 1$ for large $p$, and by the same reason as in the above paragraph, the projections of $S_n$ to the imaginary axis will not have Gaussian distribution.
\end{example}

Note that our assumptions of $\chi$ and $\psi$ being non-trivial exclude us from considering sums like \eqref{chsum} appeared in \cite{DaEr52}. Our next goal is to extend our results to the cases when one of the characters $\chi$ or $\psi$ is trivial. For trivial $\chi$ we have the following.

\begin{theorem}\label{thmtrivialchi}
Theorem \ref{mainthm}, Corollary \ref{maincor} and Corollary \ref{corGD} remains true if $\chi$ is trivial but all other conditions are still assumed.
\end{theorem}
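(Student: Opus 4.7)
The plan is to run the proof of Theorem \ref{mainthm} with $\chi$ replaced by the constant character $1$ and to track where the argument genuinely uses non-triviality of $\chi$; the answer is nowhere essential. I would first expand
\[
u_n^k=\frac{1}{(2((\beta-\alpha)H)^{1/2})^k}\sum_{\ell=0}^k\binom{k}{\ell}e^{i(2\ell-k)\theta}S_n^{k-\ell}\overline{S_n}^{\,\ell},
\]
then write each $S_n^{k-\ell}\overline{S_n}^{\,\ell}$ as a $k$-fold sum over tuples of points on $X$. After exchanging orders of summation, the outer sum over $n\in\CI$ reduces to counting how many $H$-windows contain prescribed $x$-coordinates, while the inner sum collapses, when $\chi$ is trivial, to the pure additive character sum $\psi\bigl(\sum_{j=1}^k\varepsilon_j f(P_{i_j})\bigr)$ with signs $\varepsilon_j\in\{+1,-1\}$ recording whether each factor came from $S_n$ or $\overline{S_n}$.

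Next, I would split the tuples into diagonal configurations, where the indices pair off so that the argument of $\psi$ vanishes identically, and off-diagonal configurations, where it does not. The diagonal contribution reproduces the Gaussian main term $\mu_k|\CI|/2^k$ together with the secondary $O(k^2/H)$ error exactly as in Theorem \ref{mainthm}, since this analysis is purely combinatorial and uses no character at all. For the off-diagonal contribution, I would apply the Bombieri-Weil bound for the additive sum $\sum_{Q_1,\dots,Q_k\in X}\psi(F(Q_1,\dots,Q_k))$ on the product curve $X^k$, where $F(Q_1,\dots,Q_k)=\sum_j\varepsilon_jf(Q_j)$, obtaining an error of size $O(\sqrt{p}\,d^{c_1}d_f^{c_2})$ per tuple class. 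The factor $d_g$ present in the stated error of \eqref{Mkodd}-\eqref{Mkeven} is no longer required, but continues to hold as a (non-sharp) upper bound, so the statement carries over verbatim.

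The main obstacle is verifying the Artin-Schreier non-degeneracy hypothesis needed to invoke Bombieri-Weil: concretely, that $F$, restricted to $X^k$ and off the diagonal, is not of the form $h^p-h+\text{constant}$ in the function field of $X^k$. A case-by-case reduction, freezing all but one variable and applying the hypothesis on $f$ in Theorem \ref{mainthm}, shows that any such collapse would force $f$ itself to lie in the excluded class $h^p-h+(\text{linear})+QP^b$, contradicting the assumption. This is essentially the same algebraic check already carried out in the proof of Theorem \ref{mainthm}, where the multiplicative factor $\chi(g)$ played no role in ruling out the Artin-Schreier degeneracy of the additive component; hence the check is inherited here without modification. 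Once \eqref{Mkodd}-\eqref{Mkeven} are established for trivial $\chi$, Corollaries \ref{maincor} and \ref{corGD} follow by the same method-of-moments deduction as in the non-trivial case, completing the proof.
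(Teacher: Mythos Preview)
Your core observation is correct: the proof of Theorem \ref{mainthm} never uses non-triviality of $\chi$ in an essential way, because Lemma \ref{lem:perel} fails only when \emph{both} $\tilde g$ is an $a$-th power \emph{and} $\tilde f$ is Artin--Schreier degenerate, and Lemma \ref{lem:f} rules out the latter independently of $\chi$. (Your sketch of the moment computation via the product curve $X^k$ and window-counting does not quite match the paper's actual organization, which fixes the shift tuple $\mathbf h$ first and works on the $x$-shifted curve $X_{\CU_{\mathbf h}}$; but this is cosmetic, and your reorganization could be made to work.) The paper, however, takes a genuinely different and shorter route: rather than reopening the proof, it manufactures a non-trivial multiplicative character artificially. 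Given $S_n=\sum\psi(f(P_i))$, it forms $S'_n=\sum\chi(g(P_i))\psi(f(P_i))$ with $\chi$ the quadratic character and $g(x,y)=x^2$, so that $\chi(g(P_i))=1$ unless $x(P_i)=0$; hence $S_n$ and $S'_n$ differ by at most $\deg X$ terms, which is absorbed in the error. Theorem \ref{mainthm} then applies to $S'_n$ as a black box. Your approach is more transparent about \emph{why} triviality of $\chi$ causes no trouble; the paper's trick is quicker and avoids revisiting any of the moment machinery.
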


The case for trivial $\psi$ is more difficult, but we still obtain a Gaussian distribution if we impose the reasonable assumption that $g(x,y)$ is not a complete $a$-th power.

\begin{theorem}\label{thmtrivialpsi}
Let $p$, $X$, $\chi$, $\CI$, $\CJ$, $H$ be as in Theorem \ref{mainthm} and Corollary \ref{maincor}, and let $\psi$ be trivial. Let $a$ be the order of $\chi$. Assume $g(x,y)$ is not of the form
\[\ds h^a+Q(x,y)P(x,y)^b \]
for any nonzero integer $b$, $Q\in\mathbb{F}_p(x,y)$ relatively prime to $P$, $h\in\overline{\mathbb{F}}_p(x,y)$, and $\deg{(g)}$ is small compared to $p$. Let $k$, $H$ be positive integers which are small compared to $p$. Then we have the following.
\begin{enumerate}
\item If $a=2$, and $\theta=0$ (that is we only consider the distribution on the real line), we have
\begin{equation}\label{2Mkodd}
M_k = O(H^{\frac{k}{2}}(d^{2k}+2d^{k}d_g^{k}+2d^{k}d_f^{k})\sqrt{p}\log^{2k}{p})
\end{equation}
when $k$ is odd, and
\begin{multline}\label{2Mkeven}
M_k = \frac{k!}{2^{\frac{k}{2}}\left(\frac{k}{2}\right)!}\abs{\CI}(1+O(k^2/H)) \\
+O(2^{\frac{k}{2}}H^{k}(d^{2k}-2d^{k}d_{g}^{\frac{k}{2}}+2d^{k}d_{f}^{k})\sqrt{p}(2\log{p}+1)^{k})
\end{multline}
when $k$ is even.

\item If $a>2$ is even, we have
\begin{equation}\label{aMkodd}
M_k = O(H^{\frac{k}{2}}(d^{2k}+2d^{k}d_g^{k}+2d^{k}d_f^{k})\sqrt{p}\log^{2k}{p})
\end{equation}
when $k$ is odd, and
\begin{multline}\label{aMkeven}
M_k = \frac{1}{2^k}\frac{k!}{(k/2)!}\abs{\CI} (1+O(k^{\frac{a}{2}+2}/H)) \\
 +O(H^{\frac{3k}{2}}(d^{4k}-2d^{2k}d_{g}^{k}+2d^{2k}d_{f}^{2k})\sqrt{p}\log^{2k}{p})
\end{multline}
when $k$ is even.

\item If $a$ is odd (necessarily $a>1$ since $\chi$ is nontrivial), we have
\begin{multline}\label{oMkodd}
M_k = \frac{1}{2^k((\beta-\alpha)H)^{\frac{a}{2}-1}}\frac{k!}{\left(\frac{k}{2}+\frac{a}{2}\right)!}(2\cos{a\theta})\abs{\CI}(1+O(k^{a+2}/H)) \\
 +O((H^{\frac{3k}{2}}(d^{4k}-2d^{2k}d_{g}^{k}+2d^{2k}d_{f}^{2k})\sqrt{p}\log^{2k}{p}).
\end{multline}
when $k$ is odd, and
\begin{multline}\label{oMkeven}
M_k = \frac{1}{2^k}\frac{k!}{(k/2)!}\abs{\CI} (1+O(k^{\frac{a}{2}+2}/H)) \\
 +O(H^{\frac{3k}{2}}(d^{4k}-2d^{2k}d_{g}^{k}+2d^{2k}d_{f}^{2k})\sqrt{p}\log^{2k}{p})
\end{multline}
when $k$ is even.
\end{enumerate}
\end{theorem}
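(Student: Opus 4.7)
The plan is to adapt the proof of Theorem \ref{mainthm}, retaining the same overall framework but replacing every step in which the additive character $\psi$ was decisive by an argument based on the $a$-th-power structure of the multiplicative character $\chi$. Expanding
\[
M_k \;=\; \frac{1}{2^k((\beta-\alpha)H)^{k/2}}\sum_{j=0}^{k}\binom{k}{j}e^{-i(k-2j)\theta}\sum_{n\in\CI}S_n^{k-j}\,\overline{S_n}^{j},
\]
we write each $S_n^{k-j}\overline{S_n}^{j}$ as a sum over $(k-j)$- and $j$-tuples of points on $X$ with $y$-coordinate in $\CJ$ and $x$-coordinate in $(n,n+H]$. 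Interchanging the order of summation produces, for every tuple of points, a length factor counting the $n\in\CI$ for which all the chosen $x$-coordinates lie in $(n,n+H]$, together with a character factor $\chi(G)$ where $G=\prod_{s}g(P_{i_s})\prod_{t}g(P_{l_t})^{-1}$. A tuple contributes to the main term precisely when $G$ is generically an $a$-th power on $X$, i.e.\ when after collecting coincidences every distinct point appears with net multiplicity divisible by $a$.

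Classifying these diagonal configurations is the core combinatorial step. The most economical ways to satisfy the multiplicity-mod-$a$ condition are (i) \emph{cross-pairs}, in which one index from $S_n^{k-j}$ and one from $\overline{S_n}^{j}$ share a point, contributing $\chi\,\overline{\chi}=1$; and (ii) \emph{pure $a$-clusters}, in which $a$ indices from the same side all map to a single point, contributing $\chi^{\pm a}=1$. In case (1), $a=2$ collapses the two types and any pairing of the $k$ indices contributes, recovering the Davenport--Erd\"{o}s-style main term $(k-1)!!\abs{\CI}$. In case (2), $a>2$ even, in-side $a$-clusters consume more than $2$ indices per distinct point and are sub-leading, so only cross-pair configurations survive and the main term appears only for even $k$. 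In case (3), $a$ odd, the unique efficient way to cover an odd number of indices is one pure $a$-cluster plus $(k-a)/2$ cross-pairs; combining the two symmetric contributions at $j=(k\pm a)/2$ then produces the anomalous main term of size $((\beta-\alpha)H)^{1-a/2}\abs{\CI}$ with phase $2\cos(a\theta)$. Counting bijections between matched and clustered indices and invoking \eqref{est:N} to evaluate each distinct-point choice as a factor of order $(\beta-\alpha)H$ yields the explicit coefficients claimed in \eqref{2Mkeven}, \eqref{aMkeven}, \eqref{oMkodd}, and \eqref{oMkeven}.

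For the error terms we group the off-diagonal tuples by their coincidence pattern and bound each resulting character sum on a product of copies of $X$ by a Weil--Deligne estimate, exactly as in Theorem \ref{mainthm}. The hypothesis that $g$ is not of the form $h^{a}+Q(x,y)P(x,y)^{b}$ is precisely what guarantees that $\chi\circ G$ is a nontrivial character on every relevant residual variety, so the Weil bound delivers the $\sqrt{p}\,\log^{2k}{p}$ savings per configuration; the polynomial dependence on $d$, $d_g$, $d_f$ records the degrees involved, and a completion argument converts the short-$x$-interval sums into full sums at the cost of the logarithmic factors. The principal obstacle is case (3) with $k$ odd, where the leading contribution of size $H^{1-a/2}\abs{\CI}$ is smaller than $\abs{\CI}$ yet must be separated cleanly from the error: one has to verify that any additional $a$-cluster or broken cross-pair costs a further factor of $H^{-a/2}$ and is therefore absorbed, and to control the degenerate sub-configurations in which a cluster point coincides with a cross-pair point, which produce the $O(k^{a+2}/H)$ relative remainder. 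Once this refined combinatorial bookkeeping is in place, the passage from Theorem \ref{thmtrivialpsi} to the limiting Gaussian distribution follows exactly as in the proof of Corollary \ref{corGD}.
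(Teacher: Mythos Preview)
Your proposal is correct and follows essentially the same route as the paper: expand $M_k$ via \eqref{eqnMk}, classify the tuples $\mathbf{h}$ for which $\tilde{g}_{\mathbf{h}}$ is an $a$-th power on the shifted curve $X_{\CU_{\mathbf{h}}}$ (your ``cross-pairs'' and ``pure $a$-clusters'' are exactly the paper's two mechanisms of matching $h_i$'s across numerator/denominator and grouping $a$ equal $h_i$'s on one side), count these configurations to extract the main terms, and bound the remaining tuples by Lemma~\ref{lem:perel} via Remark~\ref{rmkperel}. The case split by the parities of $a$ and $k$, including the anomalous $k$-odd, $a$-odd contribution at $j=(k\pm a)/2$ giving the $2\cos(a\theta)$ factor, matches the paper's computation line for line.
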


The analogue to Corollary \ref{maincor} for trivial $\psi$ is the following.
\begin{cor}\label{maincortrivialpsi}
Let
\begin{equation*}
\mu_k=\begin{cases}
1 \cdot 3 \cdot \ldots \cdot (k-1), & \text{if $k$ is even},\\
0, & \text{if $k$ is odd}.
\end{cases}
\end{equation*}
If $\psi$ is trivial, and keeping the other assumptions in Theorem \ref{thmtrivialpsi} and Corollary \ref{maincor}, then if we take the limit as $p$ tends to infinity with $\chi$ being a series of quadratic characters modulo $p$, and we only consider the moments on the real line, then
\begin{equation*}
\lim_{p\rightarrow\infty}\frac{M_k}{\abs{\CI}}=\mu_k.
\end{equation*}

If on the other hand, we take the limit with $\chi$ being restricted to characters of order $a>2$, then the same conclusion as in Corollary \ref{maincor} holds. That is,
\begin{equation*}
\lim_{p\rightarrow\infty}\frac{2^{k/2}M_k}{\abs{\CI}}=\mu_k.
\end{equation*}
\end{cor}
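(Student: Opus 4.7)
The plan is to deduce this corollary directly from Theorem \ref{thmtrivialpsi} by dividing through by $|\CI|$ (and, where appropriate, multiplying by $2^{k/2}$) and letting $p\to\infty$ under the growth conditions inherited from Corollary \ref{maincor}. Two things must be checked across the six sub-cases of Theorem \ref{thmtrivialpsi}: that every error term vanishes in the limit, and that the surviving main term reduces to $\mu_k$ after the claimed normalization.

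The error estimates all take the shape $H^{A}\cdot\Pi(d,d_g,d_f)\cdot\sqrt{p}\cdot(\log p)^{B}$, where $A,B=O(k)$ and $\Pi$ is a polynomial of degree $O(k)$ in each of its arguments. Since $k$ is fixed and $\log H/\log p$, $\log d/\log p$, $\log d_g/\log p$, $\log d_f/\log p$ all tend to $0$, every factor except $\sqrt{p}$ is $p^{o(1)}$. Combined with $\liminf_{p\to\infty}\log|\CI|/\log p > \tfrac{1}{2}$, which supplies $\sqrt{p}/|\CI| = p^{-\delta}$ for some $\delta>0$, each error term divided by $|\CI|$ (or by $|\CI|/2^{k/2}$) tends to $0$.

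For the main terms, the central identity is the double factorial formula
\[1\cdot 3\cdots (k-1) \;=\; \frac{k!}{2^{k/2}(k/2)!}\qquad (k\text{ even}).\]
When $\chi$ is quadratic ($a=2$), the even-$k$ main term $\frac{k!}{2^{k/2}(k/2)!}|\CI|$ in \eqref{2Mkeven} is exactly $\mu_k|\CI|$, while \eqref{2Mkodd} contributes only error, so $M_k/|\CI|\to\mu_k$. When $\chi$ has even order $a>2$, the even-$k$ main term $\frac{1}{2^k}\frac{k!}{(k/2)!}|\CI|$ from \eqref{aMkeven} multiplied by $2^{k/2}$ reduces via the same identity to $\mu_k|\CI|$, and \eqref{aMkodd} contributes only error. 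When $a$ is odd (so $a\ge 3$), the even-$k$ analysis is identical to the previous case; for odd $k$, the main term in \eqref{oMkodd} carries the factor $((\beta-\alpha)H)^{-(a/2-1)}$ with $a/2-1>0$, which tends to $0$ as $H\to\infty$, matching $\mu_k=0$.

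The main obstacle is bookkeeping rather than analysis: there is no new input beyond Theorem \ref{thmtrivialpsi}, and one must only verify that the double factorial identity, the $H^{-(a/2-1)}$ decay of the spurious odd-$k$ main term, and the $p^{-\delta}$ savings in the error terms align correctly across all three regimes of $a$ and both parities of $k$.
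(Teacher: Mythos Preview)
Your proposal is correct and matches the paper's intent: the paper states Corollary \ref{maincortrivialpsi} as an immediate consequence of Theorem \ref{thmtrivialpsi} under the growth hypotheses of Corollary \ref{maincor}, without writing out a separate proof. Your case-by-case verification (error terms of size $p^{1/2+o(1)}$ beaten by $|\CI|>p^{1/2+\delta}$, the double-factorial identity for the even-$k$ main terms, and the $((\beta-\alpha)H)^{-(a/2-1)}$ decay for the odd-$a$, odd-$k$ main term) is exactly the bookkeeping the paper leaves to the reader.
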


Finally, we get the analogue to Corollary \ref{corGD}, which shows that we still have Gaussian distribution when $\psi$ is trivial.
\begin{cor}
Assumptions as in Corollary \ref{maincortrivialpsi}. For any $\lambda\geq 0$, let $G_p(\lambda)$ be the number of integers $n\in\CI$ with $u_n\leq\lambda$. If $p$ tends to infinity, with $\chi$ a quadratic character modulo $p$ and we only consider the distribution on the real line, then
\begin{equation*}
\lim_{p\rightarrow\infty}\frac{G_p(\lambda)}{\abs{\CI}}=\frac{1}{\sqrt{2\pi}}\int_{-\infty}^{\lambda}e^{-\frac{t^2}{2}}\,dt.
\end{equation*}
On the other hand, if we restrict the $\chi$ to be characters of order $a>2$, then the same conclusion as Corollary \ref{corGD} holds. That is,
\begin{equation*}
\lim_{p\rightarrow\infty}\frac{G_p(\lambda)}{\abs{\CI}}=\frac{1}{\sqrt{\pi}}\int_{-\infty}^{\lambda}e^{-t^2}\,dt.
\end{equation*}

Therefore, we have Gaussian distributions in all the above cases, but when we take the limit through a series of quadratic character modulo $p$, we get a Gaussian distribution with different parameters compared to all other cases.
\end{cor}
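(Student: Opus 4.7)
The plan is to deduce this corollary from Corollary \ref{maincortrivialpsi} by exactly the same method-of-moments argument used to pass from Corollary \ref{maincor} to Corollary \ref{corGD} in Section \ref{pfcorGD}. The heavy lifting has already been done in Theorem \ref{thmtrivialpsi} and Corollary \ref{maincortrivialpsi}; what remains here is a standard probabilistic extraction.

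Concretely, let $\nu_p = \abs{\CI}^{-1}\sum_{n\in\CI}\delta_{u_n}$ denote the empirical measure on $\mathbb{R}$ of the normalized projections $u_n$. In the quadratic character case, Corollary \ref{maincortrivialpsi} gives $\int t^k\,d\nu_p(t) = M_k/\abs{\CI} \to \mu_k$ for every fixed $k$, where $\mu_k$ are the moments of $N(0,1)$. Since the Gaussian is uniquely determined by its moments (Carleman's criterion, trivially verified), the Fr\'echet-Shohat theorem yields $\nu_p \Rightarrow N(0,1)$ weakly. As the limit distribution function is continuous, weak convergence upgrades to pointwise convergence, giving $G_p(\lambda)/\abs{\CI} \to (2\pi)^{-1/2}\int_{-\infty}^{\lambda} e^{-t^2/2}\,dt$.

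For characters $\chi$ of order $a > 2$, Corollary \ref{maincortrivialpsi} instead gives $2^{k/2}M_k/\abs{\CI}\to \mu_k$, i.e.\ $\int t^k\,d\nu_p(t) \to 2^{-k/2}\mu_k$, which are the moments of $N(0, 1/2)$. The same Fr\'echet-Shohat argument yields $\nu_p \Rightarrow N(0,1/2)$, whose density is $\pi^{-1/2}e^{-t^2}$, producing the second formula.

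There is no serious obstacle. The only thing to verify is that the moment asymptotics in Corollary \ref{maincortrivialpsi} hold for each fixed $k$ as $p\to\infty$, which is exactly their content once the error terms are checked to vanish under the growth hypotheses already imposed on $\abs{\CI}$, $H$, $d$, $d_f$, $d_g$; these were the substance of the previous corollary. If one wishes to avoid citing Fr\'echet-Shohat, a self-contained alternative is to observe that the uniform moment bounds supply tightness of $\{\nu_p\}$, so every subsequential weak limit $\nu$ has moments $\mu_k$ (resp.\ $2^{-k/2}\mu_k$) and is therefore equal to the relevant Gaussian, from which pointwise convergence of the continuous distribution functions follows. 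The argument is parallel in every respect to the proof of Corollary \ref{corGD}, and may be quoted verbatim from Section \ref{pfcorGD} with the single change that the target variance is adjusted to $1$ in the quadratic case.
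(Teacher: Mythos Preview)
Your proposal is correct and matches the paper's approach: the paper does not write out a separate proof for this corollary, precisely because it follows from Corollary~\ref{maincortrivialpsi} by the identical method-of-moments argument given in Section~\ref{pfcorGD}, with the only change being the target variance ($1$ in the quadratic case, $1/2$ for $a>2$). Your identification of the limiting moments with those of $N(0,1)$ and $N(0,1/2)$ respectively, and the appeal to moment-determinacy of the Gaussian, is exactly the intended deduction.
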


\begin{remark}\label{rmkreal}
If the order of $\chi$ is $a=2$, then we only have Gaussian distribution on the real line, but not when $S_n$ is projected to other lines. The reason is simple: since $\chi$ is quadratic, our $S_n$ is real for any $n$ in this case, and we certainly do not have Gaussian distribution if we project $S_n$ to the imaginary axis.
\end{remark}

\begin{remark}
Although we get the same results for odd and even $a$, we note that when $a$ is odd, our estimation shows that we just barely obtain the Gaussian distribution. In fact, the main term \eqref{oMkodd} for the case $a$ and $k$ both odd is of order $\abs{\CI}/H$, which just barely tends to zero after dividing by $\abs{\CI}$, thanks to the assumption that $H$ tends to infinity with $p$.
\end{remark}

\begin{remark}
We can get back the result from Davenport-Erd\"{o}s \cite{DaEr52} if we take $X$ to be the straight line $y=0$, $\chi$ being the quadratic character modulo $p$, $\psi$ trivial, and $g(x)=x$. Note that this is exactly the case when we get different parameters for the Gaussian distribution.
\end{remark}

\section{Some preliminaries}

To prove Theorem \ref{mainthm}, the first thing we need is an estimation for the incomplete hybrid exponential sums over an affine space curve $Y\subseteq\mathbb{A}^m$, which need not be irreducible nor reduced. The sum is defined as follows.
\begin{equation*}
S_{\CJ_1,\ldots,\CJ_m} = \sum_{\textbf{x}\in Y\cap(\CJ_1\times\ldots\times\CJ_m)}\chi(\tilde{g}(\textbf{x}))\psi(\tilde{f}(\textbf{x})),
\end{equation*}
where $\textbf{x}=(x_1,\ldots,x_m)$ and $\CJ_i\subseteq[0,p-1]$ are intervals, $\tilde{g}, \tilde{f}\in\mathbb{F}_p(x_1,\ldots,x_m)$ are rational functions, and $\psi$ is a nontrivial character.

\begin{lemma}\label{lem:perel}
Let $p$ be a large prime, $D$ be the degree of $Y$, $d_{\tilde{g}},d_{\tilde{f}}$ the degrees of the denominators of $\tilde{g}, \tilde{f}$ respectively. Let $a$ be the order of $\chi$. Unless there are rational functions $\tilde{g_1},\tilde{f_1}\in\overline{\mathbb{F}}_p(x_1,\ldots,x_m)$ such that $\tilde{g}-\tilde{g_1}^a$ vanishes identically and $\tilde{f}-\tilde{f_1}^p+\tilde{f_1}$ is linear on some irreducible component of $Y$ simultaneously, we have
\begin{equation*}
\abs{S_{\CJ_1,\ldots,\CJ_m}}\leq ((D^2-3D+2Dd_{\tilde{g}}+2Dd_{\tilde{f}})\sqrt{p}+D^2+O(D))(2\log{p}+1)^m.
\end{equation*}
\end{lemma}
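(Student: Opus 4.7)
The plan is to prove Lemma \ref{lem:perel} by a standard completion technique that reduces the incomplete sum over the box $\CJ_1\times\cdots\times\CJ_m$ to a weighted average of complete hybrid character sums over $Y$, and then to apply a Weil--Perel'muter type bound on each irreducible component.

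First I would write the characteristic function of each interval as a discrete Fourier series modulo $p$:
\[
\mathbf{1}_{\CJ_i}(x_i)=\sum_{h_i=0}^{p-1}\alpha^{(i)}_{h_i}\,e_p(h_ix_i),
\]
where the coefficients $\alpha^{(i)}_{h_i}$ obey the classical estimate $\sum_{h_i=0}^{p-1}|\alpha^{(i)}_{h_i}|\le 2\log p+1$. Substituting these into the definition of $S_{\CJ_1,\ldots,\CJ_m}$ and interchanging the order of summation, one obtains
\[
S_{\CJ_1,\ldots,\CJ_m}=\sum_{\mathbf{h}\in[0,p-1]^m}\Bigl(\prod_{i=1}^m\alpha^{(i)}_{h_i}\Bigr)\,T_{\mathbf{h}},
\]
with the complete hybrid sums
\[
T_{\mathbf{h}}=\sum_{\mathbf{x}\in Y(\mathbb{F}_p)}\chi(\tilde g(\mathbf{x}))\,\psi\bigl(\tilde f(\mathbf{x})+h_1x_1+\cdots+h_mx_m\bigr).
\]
The point is that each $T_{\mathbf{h}}$ differs from the original complete sum only by the addition of the \emph{linear} term $h_1x_1+\cdots+h_mx_m$ inside the additive character.

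Next I would bound each $T_{\mathbf{h}}$ component by component. Decompose $Y$ into its irreducible components and apply the Weil--Perel'muter estimate for hybrid character sums on an irreducible curve, which yields a bound of the shape $((D^2-3D+2Dd_{\tilde g}+2Dd_{\tilde f})\sqrt p+D^2+O(D))$, provided the sum is non-degenerate on that component. Degeneracy would require $\chi\circ\tilde g$ to trivialize on the component (i.e.\ $\tilde g-\tilde g_1^a$ vanishes identically for some $\tilde g_1\in\overline{\mathbb{F}}_p(x_1,\ldots,x_m)$) \emph{and} the additive character applied to $\tilde f+h_1x_1+\cdots+h_mx_m$ to be an Artin--Schreier coboundary (i.e.\ $\tilde f+\sum h_ix_i-\tilde f_1^p+\tilde f_1$ is constant for some $\tilde f_1$). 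Since the perturbation $\sum h_ix_i$ is itself linear, any instance of such simultaneous degeneracy for some $\mathbf{h}$ is equivalent to saying that $\tilde g-\tilde g_1^a$ vanishes and $\tilde f-\tilde f_1^p+\tilde f_1$ is linear on that component, which is exactly the case excluded in the hypothesis. Thus the Weil--Perel'muter bound applies uniformly in $\mathbf{h}$.

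Finally, combining the uniform bound on $|T_{\mathbf{h}}|$ with $\sum_{\mathbf{h}}\prod_i|\alpha^{(i)}_{h_i}|\le(2\log p+1)^m$ yields the claimed estimate. The main obstacle I anticipate is the uniformity in $\mathbf{h}$: one must argue cleanly that the degeneracy hypothesis, stated once for $\tilde f$, propagates to every translate $\tilde f+\sum h_ix_i$, and one must handle possibly non-reduced and reducible $Y$ by summing the component bounds into a single bound controlled by the total degree $D$. A secondary technical point is tracking the dependence of the Perel'muter constant on the pole divisors of $\tilde g$ and $\tilde f$, which accounts for the $d_{\tilde g}$ and $d_{\tilde f}$ terms in the final estimate; this is where citing (rather than re-proving) the relevant explicit form of the Riemann Hypothesis for hybrid sums on curves will be essential.
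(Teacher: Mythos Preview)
Your proposal is correct and follows essentially the same route as the paper's proof: both reduce the incomplete sum to complete hybrid sums via additive characters (you phrase this as Fourier coefficients $\alpha^{(i)}_{h_i}$ with $\sum_h|\alpha^{(i)}_h|\le 2\log p+1$, the paper writes out the orthogonality relation and bounds $\sum_t|\sum_{m\in\CJ}\psi(tm)|$ explicitly), then invoke Perel'muter's bound uniformly on each twisted complete sum, using precisely your observation that the added term $\sum h_ix_i$ is linear and therefore cannot create degeneracy beyond what the hypothesis already excludes. The only cosmetic difference is that the paper simply cites Perel'muter for the aggregate bound $(D^2-3D+2Dd_{\tilde g}+2Dd_{\tilde f})\sqrt{p}+D^2+O(D)$ over all components at once rather than decomposing $Y$ first, so your anticipated ``obstacle'' of summing component bounds into a total-degree bound is already absorbed into the cited result.
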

\begin{proof}
The work of Perel'muter \cite{Per69} deals with the case when $S_{\CJ_1,\ldots,\CJ_m}$ is complete, i.e. if all $\CJ_i=[0,p-1]$. He showed that unless $\tilde{g}-\tilde{g_1}^a$ and $\tilde{f}-f_1^p+f_1$ vanishes identically on some irreducible component of $Y$ simultaneously, the complete sum satisfies
\begin{equation}\label{comsum}
\abs{S_{[0,p-1]^m}} \leq (D^2-3D+2Dd_{\tilde{g}}+2Dd_{\tilde{f}})\sqrt{p}+D^2+O(D).
\end{equation}
His work uses the idea of Bombieri-Weil type estimate of an exponential sum along an algebraic curve \cite{Bom66, Wei48}. Note that compared to \cite{Per69} we have an extra $O(D)$ term because we are considering an affine curve, thereby missing at most $O(D)$ terms in the sum, each of those having absolute value at most $1$.

We then express our incomplete sum $S_{\CJ_1,\ldots,\CJ_m}$ in terms of complete sums of the same type. Recall the orthogonal relation
\begin{equation}\label{orthrel}
\frac{1}{p}\sum_{t\text{~mod~$p$}}\psi(ty) = \begin{cases}
1, & \text{if~} y=0, \\
0, & \text{otherwise,}
\end{cases}
\end{equation}
we see that
\begin{align}
S_{\CJ_1,\ldots,\CJ_m} &= \sum_{\textbf{x}\in Y}\chi(\tilde{g}(\textbf{x}))\psi(\tilde{f}(\textbf{x}))\prod_{i=1}^m\left(\sum_{m_i\in\CJ_i}\frac{1}{p}\sum_{t_i\text{~mod~$p$}}\psi(t_i(m_i-x_i))\right) \label{eqnSJ}\\
&= \frac{1}{p^m}\prod_{i=1}^m\sum_{t_i\text{~mod~$p$}}\left(\sum_{m_i\in\CJ_i}\psi(t_im_i)\right) \nonumber\\
&\qquad\times\sum_{\textbf{x}\in Y}\chi(\tilde{g}(\textbf{x}))\psi(\tilde{f}(\textbf{x})-t_1x_1-\ldots-t_mx_m). \nonumber
\end{align}

From the assumption in our lemma, we see that the inner sum satisfies the assumption in \cite{Per69}, and so can be estimated by \eqref{comsum}. To estimate the outer sum, first we need the estimation
\begin{equation}\label{est1}
\abs{\sum_{t\text{~mod~$p$}}\left(\sum_{m\in\CJ}\psi(tm)\right)} \leq 2p\log{p}+\abs{\CJ}.
\end{equation}
To see this, note that any nontrivial additive character $\psi$ modulo $p$ is of the form $\psi(x)=e_p(kx)=e^{2\pi i kx/p}$ ($x\in\mathbb{F}_p$) for some $k$ with $(k,p)=1$. Let $\CJ\cap\mathbb{Z} = \{l,l+1,\dots,l+h-1\}$, where $h=\abs{\CJ}$, then
\begin{equation*}
\sum_{m\in\CJ}\psi(tm) = \sum_{m\in\CJ}e_p(ktm) =
\begin{cases}
h & \text{if}~ t=0, \\
\left(e^{\frac{-2\pi itkl}{p}}\right)\frac{1-e^{-2\pi itkh/p}}{1-e^{-2\pi itk/p}} & \text{if}~ t\neq 0.
\end{cases}
\end{equation*}
Hence if $t\neq 0$,
\begin{equation*}
\abs{\sum_{m\in\CJ}\psi(tm)} \leq \frac{2}{\abs{1-e^{-2\pi itk/p}}}.
\end{equation*}
If $\norm{\cdot}$ denotes the distance to the nearest integer, then $\abs{1-e^{-2\pi itk/p}} = 2\abs{\sin{(\pi tk/p)}} \geq \norm{\frac{kt}{p}}$ for $p$ large enough. We obtain the estimate
\begin{equation*}
\abs{\sum_{m\in\CJ}\psi(tm)} \leq 2\left(\norm{\frac{kt}{p}}\right)^{-1}.
\end{equation*}
We then sum the above over all $t$ modulo $p$. We choose the set of representatives with $0\leq\abs{t}\leq (p-1)/2$, noting that for $t\neq 0$, $(k,p)=1$, $\norm{\frac{kt}{p}}$ is a reordering of $\norm{\frac{t}{p}}$, but in our set of representatives, $\norm{\frac{t}{p}}=\frac{\abs{t}}{p}$. Now \eqref{est1} follows from the elementary inequality
\begin{equation*}
1+\frac{1}{2}+\dots+\frac{1}{\frac{p-1}{2}} \leq \log{p}.
\end{equation*}
Finally, putting \eqref{comsum} and \eqref{est1} into \eqref{eqnSJ}, we get
\begin{align*}
\abs{S_{\CJ_1,\ldots,\CJ_m}} &\leq \frac{1}{p^m}\prod_{i=1}^m (2p\log{p}+\abs{\CJ_i})((D^2-3D+2Dd_{\tilde{g}}+2Dd_{\tilde{f}})\sqrt{p}+D^2+O(D)) \\
&\leq ((D^2-3D+2Dd_{\tilde{g}}+2Dd_{\tilde{f}})\sqrt{p}+D^2+O(D))(2\log{p}+1)^m.
\end{align*}
\end{proof}

\begin{remark}\label{rmkperel}
If we assume that $\tilde{g}-\tilde{g_1}^a$ is not identically zero on $X$, then the above lemma still hold even when $\psi$ is the trivial character. In fact, the same proof hold by using any arbitrarily chosen nontrivial $\psi$ for \eqref{orthrel}. This remark will be useful when we prove Theorem \ref{thmtrivialpsi} for sums with trivial $\psi$.
\end{remark}

\begin{remark}
If $\tilde{g}-\tilde{g_1}^a$ vanishes identically and $\tilde{f}-\tilde{f_1}^p+\tilde{f_1}$ is linear on some irreducible component of $Y$ simultaneously, the resulting hybrid sum may be large in some interval $\CJ_i$.

For example, let $Y$ be the elliptic curve defined by the equation $y^2-x^3-ax-b=0$, $\CJ=[0,p/2)$ and $\chi$ the quadratic character of $\mathbb{F}_p$. Suppose now $g(x,y)=x^2$ and $f(x,y)=x^p-x$, so that $\chi(g(x,y))=1$ and $\psi(f(x,y))=1$ for any $\mathbb{F}_p$-point $(x,y)$. Then each term in the hybrid sum is $1$, and hence if $\abs{\CI}>p^{\frac{1}{2}}$, we will have
\begin{equation*}
S_{\CI,\CJ} = \frac{1}{2}\abs{\CI} + O(\sqrt{p}),
\end{equation*}
which is much bigger than the bound suggested in Lemma \ref{lem:perel} when $p$ is large.
\end{remark}

The following strange looking lemma prove that certain rational functions are not of the form disallowed by Lemma \ref{lem:perel}. This will be of vital importance for our later calculations.

\begin{lemma}\label{lem:f}
Let $p$ be a large prime, $f\in\mathbb{F}_p(x,y)$ be a rational function in two variables, $f=f_1/f_2$, $f_1,f_2\in\mathbb{F}_p[x,y]$, $f_1,f_2$ has no common factors. Suppose that $f\neq h^p-h+\text{(linear terms)}$ for any rational function $h\in\overline{\mathbb{F}}_p(x,y)$, and subject to the following conditions:
\begin{enumerate}
\item If $f$ is a polynomial, then $\deg{f}<p$. Write $f(x,y)=r_1(x)+r_2(x,y)$, where $r_1$ consists of all terms which do not depend on $y$. We further assume that either $r_2$ is not linear, or if $r_2$ is linear, then $\deg{r_1}\geq 3$.
\item If $\deg{f_2}\geq 1$ (so that $f$ is not a polynomial), then $f_2$ is not a constant multiple of the $p$-th power of any polynomial in $\mathbb{F}_p[x,y]$.
\end{enumerate}

Let $H$, $j_1,j_2$ be positive integers so that both $H$ and $j_1+j_2$ are small compared to $p$. Let $1\leq h_1,\ldots, h_{j_1+j_2}\leq H$ be integers, which may or may not be distinct. Let $y_1,\ldots,y_{j_1+j_2}$ be indeterminates, which again may or may not be the same. Suppose that $y_i,y_j$ stand for the same indeterminate if and only if $h_i=h_j$. Define
\[F(x,y_1,\ldots,y_{j_1+j_2})=\sum_{j=1}^{j_1}f(x+h_j,y_j)-\sum_{j=j_1+1}^{j_1+j_2}f(x+h_j,y_j).\]
Then if $F=\tilde{h}^p-\tilde{h}+\text{(linear terms)}$ for some rational function $\tilde{h}\in\overline{\mathbb{F}}_p(x,y_1,\ldots,y_{j_1+j_2})$, we have $j_1=j_2$ and $F(x,y_1,\ldots,y_{j_1+j_2})$ is the zero polynomial.
\end{lemma}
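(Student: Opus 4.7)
The strategy is to successively constrain the form of $\tilde{h}$ using the hypotheses on $f$, and then exploit the special structure of $F$ as a sum of functions each depending on a single $z_i$, where $z_1,\ldots,z_s$ denote the distinct indeterminates among $y_1,\ldots,y_{j_1+j_2}$. First, I would write $\tilde{h}=u/v$ in lowest terms in $\overline{\mathbb{F}}_p(x,z_1,\ldots,z_s)$ and use the identity $\tilde{h}^p-\tilde{h}=u(u^{p-1}-v^{p-1})/v^p$ (which is already in lowest terms, since $\gcd(u,v)=1$ forces $\gcd(v^p,u(u^{p-1}-v^{p-1}))=1$). Matching denominators with $F-L$ gives two regimes. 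If $\deg f_2\geq 1$, then by coprimality of the shifted factors $f_2(x+k_i,z_i)$ for different $i$ (a consequence of the distinctness of the $k_i\in\{1,\ldots,H\}$ and $\gcd(f_1,f_2)=1$), the denominator of $F-L$ in lowest terms is exactly $\prod_{i:c_i\neq 0}f_2(x+k_i,z_i)$; equating it with $v^p$ forces every irreducible factor of $f_2$ to occur with multiplicity divisible by $p$, contradicting condition (2) unless all $c_i=0$, in which case $F=0$ and $j_1=j_2$ immediately. Otherwise $f$ is a polynomial, $F$ is a polynomial, and $v$ must be a unit, so $\tilde{h}\in\overline{\mathbb{F}}_p[x,z_1,\ldots,z_s]$.

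Next, expanding $\tilde{h}=\sum_\alpha C_\alpha(x)z^\alpha$, I would observe that $F=\sum_i c_i f(x+k_i,z_i)$ contains no monomial involving two or more distinct $z_i$'s and no monomial of total $z$-degree $\geq 2$. Because Frobenius sends $\alpha$ to $p\alpha$, matching monomials on both sides of $F=\tilde{h}^p-\tilde{h}+L$ (and inducting on $p$-adic depth, using that $\tilde{h}$ is a finite polynomial) forces every mixed $C_\alpha$ to vanish and every pure $C_{n\vec{e}_i}$ for $n\geq 1$ to be constant, with $C_{p^k\vec{e}_i}=C_{\vec{e}_i}^{p^k}$; finiteness then gives $C_{\vec{e}_i}=0$. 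Hence $\tilde{h}=C(x)$ depends only on $x$. Reading off the coefficient of $z_j$ on both sides of $F=\tilde{h}^p-\tilde{h}+L$ shows that the coefficient of $y$ in $r_2$ times $c_j$ equals the $z_j$-coefficient of $L$ (a constant, hence no new constraint), while the $x$-only match yields
\[\sum_j c_j\, r_1(x+k_j)\;=\;C(x)^p-C(x)+L_0(x),\]
where $L_0$ is linear in $x$. Since $\deg r_1<p$, a degree comparison forces $C$ to be a constant, and so $\sum_j c_j\,r_1(x+k_j)$ is itself linear in $x$.

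It remains to invoke hypotheses (1)(a) and (1)(b). In case (1)(a), a parallel one-variable analysis (setting $z_l=0$ for $l\neq i$) would, if some $c_i\neq 0$, express $r_2$ in the form $K(u,v)^p-K(u,v)+\beta v$; combined with $\deg r_2<p$ and $r_2$ non-linear this forces $K$ non-constant and gives a degree contradiction, so all $c_i=0$ and $F=0$. In case (1)(b), $r_2=cy$ is linear and $d:=\deg r_1\geq 3$. Expanding $r_1(x+k_j)$ binomially and matching the coefficients of $x^e$ for $e\geq 2$ in the linearity constraint yields the moment identities
\[S_e\;:=\;\sum_j c_j\,k_j^e\;=\;0\qquad (e=0,1,\ldots,d-2);\]
the identity $S_0=0$ is exactly $j_1=j_2$. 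The main obstacle is the final step: turning these $d-1$ moment identities, together with the combinatorial structure $c_j=a_j-b_j$ (differences of bounded multiplicities) and the distinctness of the $k_j\in\{1,\ldots,H\}$, into the vanishing of every $c_j$ (and hence $F\equiv 0$). This will be carried out by a Vandermonde/Newton-identity argument exploiting that $\deg r_1\geq 3$ supplies enough moments to force the two shift-multisets to coincide.
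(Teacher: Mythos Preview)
Your treatment of the non-polynomial case ($\deg f_2\ge 1$) is essentially the paper's: put $\tilde h=h_1/h_2$ in lowest terms, note $\tilde h^p-\tilde h$ has denominator $h_2^p$, and match against $\prod_i f_2(x+u_i,y_i)$ to force $f_2$ to be a $p$-th power, contradicting (2). Good.

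In the polynomial case you work much harder than the paper does. Once $F$ is a polynomial, the paper observes in one line that $\deg F\le\deg f<p$, while $\tilde h^p-\tilde h$ has degree $\ge p$ unless $\tilde h$ is constant; hence $\tilde h$ is constant and $F$ is linear. Your monomial-by-monomial analysis of $\tilde h$ (Frobenius orbits, $p$-adic depth, mixed-term cancellation) eventually reaches the same conclusion but is an unnecessary detour. From ``$F$ linear'' the paper then reads off immediately that $R_2=\sum_i m_i\,r_2(x+u_i,y_i)$ linear forces $r_2$ linear (the $y_i$ are independent indeterminates), and claims $R_1=\sum_i m_i\,r_1(x+u_i)$ linear forces $\deg r_1\le 2$, contradicting (1). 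Your case (1)(a) is a more roundabout version of the first of these two observations.

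The genuine gap is your final step in case (1)(b). You correctly extract the power-sum identities $S_e=\sum_j c_j k_j^{\,e}=0$ for $e=0,\dots,d-2$ (with $d=\deg r_1$), but the assertion that ``$\deg r_1\ge 3$ supplies enough moments to force the two shift-multisets to coincide'' is false: $d-1$ linear relations cannot kill $r$ unknowns once $r\ge d$. Concretely, take $f(x,y)=x^3+y$ (so $r_2=y$ is linear and $\deg r_1=3$), shifts $k_1=1$, $k_2=2$, $k_3=3$ with net multiplicities $c_1=1$, $c_2=-2$, $c_3=1$; this comes from $j_1=j_2=2$, $(h_1,h_2)=(1,3)$, $(h_3,h_4)=(2,2)$. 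Then $S_0=0$, $S_1=1-4+3=0$, and indeed
\[
R_1(x)=(x+1)^3-2(x+2)^3+(x+3)^3=6x+12
\]
is linear, while $F=6x+12+y_1-2y_2+y_3$ is linear but not zero. No Vandermonde or Newton identity, nor the boundedness of the $c_j$, rescues this. (The paper's parenthetical claim that ``the coefficient of $x^{\deg r_1-1}$ in $R_1(x)$ does not vanish'' is defeated by the same example, so this is a soft spot in the lemma as stated, not merely in your write-up; your plan does not close it.)
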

\begin{proof}
First we collect the terms in $F$ that coincide (i.e. with equal $h_j$'s) and reordering the $y_j$'s if necessary, we get
\begin{equation}\label{eqnF}
F=m_1f(x+u_1,y_1)+\ldots+m_rf(x+u_r,y_r),
\end{equation}
where $m_1,\ldots,m_r\in\mathbb{Z}$, $u_1,\ldots,u_r\in\mathbb{F}_p$ are distinct, $1\leq u_i \leq H$, and $y_1,\ldots,y_r$ are distinct indeterminate. It suffices to show that $m_1,\ldots,m_r$ are all zero.

Suppose not all the $m_j$'s are zero, then by removing the $m_j$'s that are zero, we may assume that $m_j\neq 0$ for any $j$ in \eqref{eqnF}. By assumption, $F=\tilde{h}^p-\tilde{h}+\text{(linear terms)}$ for some rational function $\tilde{h}$.

First, if $f$ is a polynomial, then $F$ and hence $h$ are polynomials. From \eqref{eqnF} and the assumption that $\deg{f}<p$, we see that $\deg{F}<p$. However, if $\tilde{h}$ is non-constant, then $F=\tilde{h}^p-\tilde{h}+\text{(linear terms)}$ has degree greater than or equal to $p$, which is impossible. So $\tilde{h}$ is a constant, and so $F-\text{(linear terms)}=\tilde{h}^p-\tilde{h}\in\mathbb{F}_p$. This implies $F$ is linear. We claim that this is also impossible unless $F$ is zero.

To prove the claim, we let $f(x,y)=r_1(x)+r_2(x,y)$, where $r_1$ consists of all the terms that do not depend on $y$. From \eqref{eqnF}, we see that
\[F=(m_1r_1(x+u_1)+\ldots+m_rr_1(x+u_r)) + (m_1r_2(x,y_1)+\ldots+m_rr_2(x+u_r,y_r)) \]
is linear. This clearly implies that
\begin{align*}
R_1(x) &= m_1r_1(x+u_1)+\ldots+m_rr_1(x+u_r), \\
R_2(x,y_1,\ldots,y_r) &= m_1r_2(x,y_1)+\ldots+m_rr_2(x+u_r,y_r)
\end{align*}
are both linear. From the expression for $R_2$ it is immediate that $r_2$ is linear, and the conditions that $H, j_1+j_2$ is small compared to $p$ and $\deg{f}<p$ ensure that $\deg{r_1}\leq 2$ (or otherwise the coefficient of $x^{\deg{r_1}-1}$ in $R_1(x)$ does not vanish and so it cannot be linear). This contradicts to our assumption imposed on $r_1$ and $r_2$.

On the other hand, if $\deg{f_2}\geq 1$, then let $F=F_1/F_2, \tilde{h}=h_1/h_2$ be in lowest form (the numerator has no common factors with the denominator). It is easy to see that $\deg{h_2}\geq 1$. By clearing the denominator in \eqref{eqnF} and compare with $F=h^p-h+\text{(linear terms)}$, we get
\begin{equation*}
\frac{F_1}{f_2(x+u_1,y_1)\ldots f_2(x+u_r,y_r)} = \frac{h_1^p-h_1h_2^{p-1}-h_2^p\text{(linear terms)}}{h_2^p}.
\end{equation*}
Both sides are clearly in its lowest form. Hence $f_2(x+u_1,y_1)\ldots f_2(x+u_r,y_r)=h_2^p$, which implies each of the $f_2(x+u_j,y_j)$ is a constant multiple of a complete $p$-th power (here the fact that $j_1+j_2$ is small compared to $p$ is critical, so that the factors of $f_2$ that involves only $x$ cannot stack together and become a $p$-th power if they are not originally a $p$-th power). This is a contradiction to our assumption when $\deg{f_2}\geq 1$.
\end{proof}

\section{Computation of the moments $M_k$}

Recall that $S_n$ is defined by \eqref{def:Sn}, $u_n$ by \eqref{def:un} and the moments $M_k$ by \eqref{def:Mk}. Our calculation of $M_k$ starts with
\begin{align}
M_k &= \frac{1}{2^k((\beta-\alpha)H)^{\frac{k}{2}}}\sum_{n\in\CI}\sum^k_{j=0}\binom{k}{j}e^{-ji\theta+(k-j)i\theta}S_n^j\overline{S_n}^{k-j} \label{eqnMk}\\
&= \frac{1}{2^k((\beta-\alpha)H)^{\frac{k}{2}}}\sum^k_{j=0}\binom{k}{j}e^{(k-2j)i\theta}S(j,k-j), \nonumber
\end{align}
where
\begin{equation}\label{eqnSJ12}
S(j_1,j_2) = \sum_{n\in\CI}S_n^{j_1}\overline{S_n}^{j_2}.
\end{equation}
The diagonal sum $S(j,j)$ behave differently from the non-diagonal ones, and we treat them separately.

\subsection{The sum $S(j,j)$}

For $j\geq 0$ we have
\begin{equation*}
S(j,j)=\sum_{n\in\CI}\abs{S_n}^{2j},
\end{equation*}
and clearly $S(0,0)=\abs{\CI}+O(1)$. An estimate for $S(j,j)$ when $j$ is positive is given by the following lemma.

\begin{lemma}\label{lem:SJJ}
Let $p$ be a large prime, and $X$ be an irreducible affine plane curve of degree $d>1$ over $\mathbb{F}_p$ defined by the equation $P(x,y)=0$, $\chi,\psi$ be a nontrivial multiplicative character and a nontrivial additive character modulo $p$ respectively, $f,g\in\mathbb{F}_p(x,y)$ be two rational functions. Define $d_g,d_f$ be the degree of the denominator of $g$ and $f$ respectively. Suppose $f$ satisfies the same conditions as in Theorem \ref{mainthm}.

Let $\CI\subseteq[0,p-1]$ an interval and $\CJ=[\alpha p,\beta p)$ an interval, where $0\leq \alpha < \beta \leq 1$, such that no two points on $X$ with their $y$-coordinates in $\CJ$ have the same $x$-coordinates. Let $H$, $j$ be small compared to $p$, then we have
\begin{equation*}
S(j,j) = j!H^j\abs{\CI}(\beta-\alpha)^{2j}(1+O(j^2/H))+O(H^{2j}(d^{4j}-2d^{2j}d_{g}^j+2d^{2j}d_{f}^{2j})\sqrt{p}\log^{2j}{p}).
\end{equation*}
\end{lemma}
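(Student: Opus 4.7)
The plan is to expand $|S_n|^{2j}=S_n^{j}\,\overline{S_n}^{j}$ as a $2j$-fold sum parametrized by offsets $\vec h=(h_1,\dots,h_j),\vec h'=(h'_1,\dots,h'_j)\in\{1,\dots,H\}^j$, with the $\ell$-th summand contributing points $P_\ell=(n+h_\ell,y_\ell)\in X$ and $Q_\ell=(n+h'_\ell,y'_\ell)\in X$ and $y_\ell,y'_\ell\in\CJ$ (each uniquely determined by its $x$-coordinate, thanks to the injectivity assumption on $\CJ$). Interchanging the order of summation,
\[
S(j,j)=\sum_{\vec h,\vec h'}T(\vec h,\vec h'),\quad T(\vec h,\vec h')=\sum_{n\in\CI}\sum_{y,y'}\chi\!\Bigl(\tfrac{\prod_\ell g(n+h_\ell,y_\ell)}{\prod_\ell g(n+h'_\ell,y'_\ell)}\Bigr)\psi\!\Bigl(\sum_\ell f(n+h_\ell,y_\ell)-\sum_\ell f(n+h'_\ell,y'_\ell)\Bigr).
\]
For each fixed $(\vec h,\vec h')$, this is an incomplete hybrid exponential sum of exactly the type handled by Lemma~\ref{lem:perel}, on the affine variety $Y_{\vec h,\vec h'}\subset\mathbb A^{2j+1}$ cut out by $\{P(x+h_\ell,y_\ell)=0,\,P(x+h'_\ell,y'_\ell)=0\}_{\ell=1}^{j}$ and restricted to $\CI\times\CJ^{2j}$.

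I would then split the pairs $(\vec h,\vec h')$ into three classes. \textbf{Case A} (main term): the multisets $\{h_\ell\}$ and $\{h'_\ell\}$ coincide and all $h_\ell$ are distinct. Then there is a unique permutation $\sigma$ with $h_\ell=h'_{\sigma(\ell)}$, and because distinct points of $X$ with $y$-coordinate in $\CJ$ are distinguished by their $x$-coordinate, the constraints force $y_\ell=y'_{\sigma(\ell)}$. The $\chi$- and $\psi$-factors cancel, and $T(\vec h,\vec h')$ reduces to the point count $\#\{(n,y_1,\dots,y_j)\in\CI\times\CJ^j:P(n+h_\ell,y_\ell)=0,\,\ell=1,\dots,j\}$. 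Applying the uniform-distribution estimate for $\mathbb F_p$-points on affine curves inside a box (the same estimate cited for \eqref{est:N}), this count equals $(\beta-\alpha)^j\,\abs{\CI}+O(d^j\sqrt p\,\log^{j+1}p)$. There are $j!\cdot H(H-1)\cdots(H-j+1)=j!\,H^j(1+O(j^2/H))$ such pairs, yielding the announced main term.

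\textbf{Case B}: the multisets coincide but some $h_\ell$'s collide, giving $r<j$ distinct offsets. Cancellation still occurs, so $T$ is a point count of size only $\approx(\beta-\alpha)^r\abs{\CI}$. A direct combinatorial tally (dominated by $r=j-1$, i.e.\ a single doubling) shows the total Case~B contribution is $O(j^2 H^{j-1}(\beta-\alpha)^{j-1}\abs{\CI})$, which is exactly the $O(j^2/H)$ relative correction to the main term. \textbf{Case C}: the multisets differ. Here Lemma~\ref{lem:f}, applied with $j_1=j_2=j$, guarantees that the $\psi$-argument $\sum_\ell f(x+h_\ell,y_\ell)-\sum_\ell f(x+h'_\ell,y'_\ell)$ is not of the excluded form $\tilde h^{p}-\tilde h+(\text{linear})$, so the non-degeneracy hypothesis of Lemma~\ref{lem:perel} is satisfied on every irreducible component of $Y_{\vec h,\vec h'}$. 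Applying Lemma~\ref{lem:perel} with $m=2j+1$, Bezout-degree bound $D\le d^{2j}$, and denominator-degree bounds $d_{\tilde g}\le 2j\,d_g$, $d_{\tilde f}\le 2j\,d_f$, then multiplying by the at most $H^{2j}$ off-diagonal pairs, yields the stated error term.

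The main technical hurdle is Case~C: verifying that the hypothesis of Lemma~\ref{lem:f} indeed propagates to every irreducible component of the possibly reducible, possibly non-reduced variety $Y_{\vec h,\vec h'}$, and correctly tracking the effective degree $D$ and the true denominator degrees of $\tilde g,\tilde f$ after cancellation, so that the per-pair Perel'muter bound is strong enough to survive multiplication by the crude count $H^{2j}$ of off-diagonal pairs. A secondary issue is obtaining a clean uniform-distribution count in Case~A on a curve whose degree $d^j$ grows with $j$; for $j=O(\log p)$ this is classical, but the error term must again be controlled carefully against the combinatorial weight coming from the $(\vec h,\vec h')$-sum.
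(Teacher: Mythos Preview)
Your approach is essentially the same as the paper's: expand $|S_n|^{2j}$, reindex by offset tuples, identify the ``diagonal'' tuples (your Cases~A and~B) where the multisets $\{h_\ell\}$ and $\{h'_\ell\}$ agree as those for which Lemma~\ref{lem:f} forces $\tilde f\equiv 0$ and the summand reduces to a point count, and bound the remaining tuples via Lemma~\ref{lem:perel}. The only notable packaging difference is that the paper builds the auxiliary curve $X_{\CU_{\mathbf h}}$ using one $y$-variable per \emph{distinct} offset (so that repeated $h$'s are automatically handled and the object is always one-dimensional), whereas your $Y_{\vec h,\vec h'}$ as written carries $2j$ $y$-variables regardless; you should make explicit that repeated offsets collapse variables before invoking Lemma~\ref{lem:perel}, since otherwise $Y$ is not a curve.
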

\begin{proof}
We have
\begin{align}\label{calSn}
\abs{S_n}^{2j} &= \sum_{\substack{n<x(P_{i_1})\leq n+H \\ y(P_{i_1})\in\CJ}}\cdots\sum_{\substack{n<x(P_{i_{2j}})\leq n+H \\ y(P_{i_{2j}})\in\CJ}}\prod_{l=1}^j\chi(g(P_{i_l}))\psi(f(P_{i_l})) \\
&\qquad\prod_{l=j+1}^{2j}\bar{\chi}(g(P_{i_l}))\bar{\psi}(f(P_{i_l})) \nonumber \\
&= \sum_{\substack{n<x(P_{i_1})\leq n+H \\ y(P_{i_1})\in\CJ}}\cdots\sum_{\substack{n<x(P_{i_{2j}})\leq n+H \\ y(P_{i_{2j}})\in\CJ}}\chi\left(\frac{g(P_{i_1})\ldots g(P_{i_j})}{g(P_{i_{j+1}})\ldots g(P_{i_{2j}})}\right) \nonumber \\
&\qquad \times\psi\left(\sum_{l=1}^j f(P_{i_l})-\sum_{l=j+1}^{2j} f(P_{i_l})\right). \nonumber
\end{align}
The main difficulty here is that the contents inside the two characters are not rational functions, and so Lemma \ref{lem:perel} is not directly applicable. We proceed by transforming the sum into a hybrid sum on another curve, so that we can apply Lemma \ref{lem:perel}.

If $X$ be an absolutely irreducible affine plane curve defined by $P(x,y)=0$, and $\CU=\{u_1,\ldots,u_m\}$ be a subset of $\{1,\ldots,p\}$. Similar to \cite{MaZa10}, to each pair $(X,\CU)$, we define the $x$-shifted curve of $X$ by $\CU$, $X_{\CU}$, to be the curve defined by the family of equations
\begin{align*}
P(x+u_1,y_1) &= 0 \\
P(x+u_2,y_2) &= 0 \\
\vdots & \\
P(x+u_m,y_m) &= 0
\end{align*}
in $\mathbb{A}_p^{m+1}$, the affine $(m+1)$-space over $\mathbb{F}_p$. It is easy to see that $X_{\CU}$ is indeed a curve. (Note that the definition here is a little bit different from that of \cite{MaZa10}.) From the definition of $C_{\CU}$ it is immediate that a point $(x,y_1,\ldots,y_m)$ of $X_{\CU}$ correspond to an $m$-tuple $(Q_1,\ldots,Q_m)$ of distinct points in $X$ with $x(Q_i)=x+u_i$.

Now fix a $(2j)$-tuple $\textbf{h}=(h_1,\ldots,h_{2j})$, with $1\leq h_i\leq H$, and set $\CU_{\textbf{h}}=\{u_1,\ldots,u_m\}$ be the set of all $h_i$ without multiplicity. By our assumption on $\CJ$, $P_{i_{l_1}}=P_{i_{l_2}}$ if and only if $h_{l_1}=h_{l_2}$. Thus we can view the $(2j)$-tuple $(P_{i_1},\ldots,P_{i_{2j}})$ appeared in the above sum \eqref{calSn} as a point on $X_{\CU}$ with $x$-coordinates equal to $n$, and this correspondence is one-to-one. So using \eqref{calSn}, and change the order of summation, we get
\begin{align*}
S(j,j) &= \sum_{h_1=1}^H\cdots\sum_{h_{2j}=1}^H\sum_{\substack{x\in\CI, y_1,\ldots,y_{2j}\in\CJ \\ (x,y_1,\ldots,y_{2j})\in X_{\CU_{\textbf{h}}}}}\chi\left(\frac{g(x+h_1,y_1)\ldots g(x+h_j,y_j)}{g(x+h_{j+1},y_{j+1})\ldots g(x+h_{2j},y_{2j})}\right) \\
&\qquad \times\psi\left(\sum_{l=1}^j f(x+h_l,y_l)-\sum_{l=j+1}^{2j} f(x+h_l,y_l)\right),
\end{align*}
where $y_i$ and $y_j$ stand for the same indeterminate if and only if $h_i=h_j$. Since
\begin{equation*}
\tilde{g}_{\textbf{h}}(x,y_1,\ldots,y_{2j}) = \frac{g(x+h_1,y_1)\ldots g(x+h_j,y_j)}{g(x+h_{j+1},y_{j+1})\ldots g(x+h_{2j},y_{2j})}
\end{equation*}
and
\begin{equation*}
\tilde{f}_{\textbf{h}}(x,y_1,\ldots,y_{2j}) = \sum_{l=1}^j f(x+h_l,y_l)-\sum_{l=j+1}^{2j} f(x+h_l,y_l)
\end{equation*}
are rational functions, we can now apply Lemma \ref{lem:perel} whenever the assumptions in that lemma are satisfied. We first calculate
\begin{gather*}
D=\deg{X_{\CU_{\textbf{h}}}}\leq d^{2j}, \\
\deg{\text{(denominator of $\tilde{g}_{\textbf{h}}$)}} \leq d_g^j, \\
\deg{\text{(denominator of $\tilde{f}_{\textbf{h}}$)}} \leq d_f^{2j}.
\end{gather*}

To estimate the number of $(2j)$-tuples $\textbf{h}=(h_1,\ldots,h_{2j}$ that does not satisfy the assumption of Lemma \ref{lem:perel} is to estimate the number of such tuples with $\tilde{g}_{\textbf{h}}$ being an $($ord $\chi)$-th power and $\tilde{f}_{\textbf{h}}$ is of the form $h^p-h+\text{(linear terms)}$. From Lemma \ref{lem:f}, we must have $\tilde{f}_{\textbf{h}}=0$, and so $(h_1,\ldots,h_j)$ and $(h_{j+1},\ldots,h_{2j})$ only differs by a permutation. Since there are $j(j-1)/2$ possible pairs from a $j$-tuple, there are a total $O(j^2H^{j-1})$ $j$-tuples that have at least two equal components, and for $(h_1,\ldots,h_j)$ with distinct components, there are exactly $j!$ possible permutations. Thus, the number of terms that we cannot use Lemma \ref{lem:perel} to estimate is $j!H^j(1+O(j^2/H))$. By the fact that $\mathbb{F}_p$-points are uniformly distributed on an affine curve (see Corollary 2.7 in \cite{MaZa10}), each of the terms with distinct components contribute
\[\abs{\CI}(\beta-\alpha)^{j}+O(2^{j}d^{2j}\sqrt{p}\log^{j}{p}) \]
to the sum (except when we hit a pole of $g(x,y)$, and their contribution can be absorbed in the error term above), and is less for the terms with at least two components equal. Hence, the sum of all terms that we cannot apply Lemma \ref{lem:perel} is
\[j!H^j\abs{\CI}(\beta-\alpha)^{j}(1+O(j^2/H))+O(2^{j}d^{2j}\sqrt{p}\log^{j}{p}).\]
For the other terms, we use Lemma \ref{lem:perel}, and the contribution of these terms to the sum is
\[O(H^{2j}(d^{4j}-2d^{2j}d_{g}^j+2d^{2j}d_{f}^{2j})\sqrt{p}(2\log{p}+1)^{2j}).\]
Combining the above two estimations, we finally get
\begin{equation*}
S(j,j) = j!H^j\abs{\CI}(\beta-\alpha)^{j}(1+O(j^2/H))+O(2^jH^{2j}(d^{4j}-2d^{2j}d_{g}^j+2d^{2j}d_{f}^{2j})\sqrt{p}\log^{2j}{p}).
\end{equation*}
This finishes the proof of our lemma.
\end{proof}

\subsection{The sum $S(j_1,j_2)$ for $j_1\neq j_2$}

As in the previous subsection, fix a $(j_1+j_2)$-tuple $\textbf{h}=(h_1,\ldots,h_{j_1+j_2})$, with $1\leq h_i\leq H$, and set $\CU_{\textbf{h}}=\{u_1,\ldots,u_m\}$ be the set of all $h_i$ without multiplicity. We have
\begin{multline}\label{calSJ12}
S(j_1,j_2) = \sum_{h_1=1}^H\cdots\sum_{h_{j_1+j_2}=1}^H\sum_{\substack{x\in\CI, y_1,\ldots,y_{j_1+j_2}\in\CJ \\ (x,y_1,\ldots,y_{j_1+j_2})\in X_{\CU_{\textbf{h}}}}}\chi(\tilde{g}(x,y_1,\ldots,y_{j_1+j_2})) \\
\times\psi(\tilde{f}(x,y_1,\ldots,y_{j_1+j_2})),
\end{multline}
where
\begin{equation*}
\tilde{g}_{\textbf{h}}(x,y_1,\ldots,y_{j_1+j_2}) = \frac{g(x+h_1,y_1)\ldots g(x+h_{j_1},y_{j_1})}{g(x+h_{j_1+1},y_{j_1+1})\ldots g(x+h_{j_1+j_2},y_{j_1+j_2})}
\end{equation*}
and
\begin{equation*}
\tilde{f}_{\textbf{h}}(x,y_1,\ldots,y_{j_1+j_2}) = \sum_{l=1}^{j_1} f(x+h_l,y_l)-\sum_{l=j_1+1}^{j_1+j_2} f(x+h_l,y_l).
\end{equation*}
Here again $y_i$ and $y_j$ stand for the same indeterminate if and only if $h_i=h_j$. We also have
\begin{gather*}
D=\deg{X_{\CU_{\textbf{h}}}}\leq d^{j_1+j_2}, \\
\deg{\text{(denominator of $\tilde{g}_{\textbf{h}}$)}} \leq d_g^{j_2}, \\
\deg{\text{(denominator of $\tilde{f}_{\textbf{h}}$)}} \leq d_f^{j_1+j_2}.
\end{gather*}

Unlike the case for $S(j,j)$, here by Lemma \ref{lem:f} we see that every term in our sum satisfy the assumption in Lemma \ref{lem:perel}. Therefore, we easily get the following lemma.

\begin{lemma}\label{lem:SJ12}
Assumptions as in Lemma \ref{lem:SJJ}. We have
\begin{equation*}
S(j_1,j_2) = O(H^{j_1+j_2}(d^{2j_1+2j_2}+2d^{j_1+j_2}d_g^{j_2}+2d^{j_1+j_2}d_f^{j_1+j_2})\sqrt{p}\log^{2j_1+2j_2}{p}).
\end{equation*}
\end{lemma}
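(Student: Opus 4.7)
The plan is to apply Lemma \ref{lem:perel} term-by-term in the expression \eqref{calSJ12}, exploiting the fact that the diagonal obstruction encountered in the proof of Lemma \ref{lem:SJJ} disappears precisely when $j_1\neq j_2$.

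First I fix a $(j_1+j_2)$-tuple $\textbf{h}=(h_1,\ldots,h_{j_1+j_2})$ and consider the inner sum over points $(x,y_1,\ldots,y_{j_1+j_2})$ on $X_{\CU_{\textbf{h}}}$ with $x\in\CI$ and $y_i\in\CJ$. This is a hybrid exponential sum of the type treated by Lemma \ref{lem:perel}, with the rational functions $\tilde{g}_{\textbf{h}}$ and $\tilde{f}_{\textbf{h}}$ as displayed, and the degree bounds
\[D\leq d^{j_1+j_2},\qquad d_{\tilde{g}_{\textbf{h}}}\leq d_g^{j_2},\qquad d_{\tilde{f}_{\textbf{h}}}\leq d_f^{j_1+j_2}\]
already recorded before the lemma.

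The crucial verification is the hypothesis of Lemma \ref{lem:perel}: that $\tilde{f}_{\textbf{h}}$ is not of the form $\tilde{h}^p-\tilde{h}+\text{(linear terms)}$ on any component of $X_{\CU_{\textbf{h}}}$. But $\tilde{f}_{\textbf{h}}$ is exactly the function $F(x,y_1,\ldots,y_{j_1+j_2})$ to which Lemma \ref{lem:f} applies, and that lemma forces any such representation to entail $j_1=j_2$. Since we are in the non-diagonal regime $j_1\neq j_2$, this forbidden shape never occurs, so Lemma \ref{lem:perel} applies uniformly in $\textbf{h}$ and gives
\[\Bigl|\!\!\sum_{\substack{x\in\CI,\,y_1,\ldots,y_{j_1+j_2}\in\CJ\\ (x,y_1,\ldots,y_{j_1+j_2})\in X_{\CU_{\textbf{h}}}}}\!\!\chi(\tilde{g}_{\textbf{h}})\psi(\tilde{f}_{\textbf{h}})\Bigr|\ll (d^{2(j_1+j_2)}+2d^{j_1+j_2}d_g^{j_2}+2d^{j_1+j_2}d_f^{j_1+j_2})\sqrt{p}\,(2\log p+1)^{j_1+j_2+1}\]
for each $\textbf{h}$ (the ambient dimension $m=j_1+j_2+1$ supplies the exponent on the log).

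Summing over the $H^{j_1+j_2}$ tuples $\textbf{h}$ multiplies this bound by $H^{j_1+j_2}$, and absorbing the modest exponent $j_1+j_2+1$ on the logarithm into $2(j_1+j_2)$ yields the claimed estimate. Since every step is a direct citation of the two preceding lemmas, there is no real obstacle: the whole point is that Lemma \ref{lem:f} was designed precisely so that the bad case cannot arise off the diagonal, which is why $S(j_1,j_2)$ for $j_1\neq j_2$ contributes only to the error term in \eqref{Mkodd} and \eqref{Mkeven}.
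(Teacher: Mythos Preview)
Your proof is correct and follows essentially the same approach as the paper: apply Lemma~\ref{lem:f} to rule out the degenerate shape of $\tilde{f}_{\textbf{h}}$ (which would force $j_1=j_2$), then invoke Lemma~\ref{lem:perel} uniformly for every tuple $\textbf{h}$ and sum over the $H^{j_1+j_2}$ tuples. The paper's own proof is the one-sentence remark immediately preceding the lemma, and your write-up simply spells out those details.
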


\section{The proof of Theorem \ref{mainthm}}

Now we have all the ingredients we need to calculate the moments $M_k$. First suppose $k$ is an odd positive integer. Then $j\neq k-j$ for any integer $j$, so we can bound $M_k$ by using Lemma \ref{lem:SJ12} in the formula \eqref{eqnMk}. We get
\begin{align*}
M_k &= O\left(\frac{1}{2^k((\beta-\alpha)H)^{\frac{k}{2}}}\sum^k_{j=0}\binom{k}{j}H^{k}(d^{2k}+2d^{k}d_g^{k-j}+2d^{k}d_f^{k})\sqrt{p}\log^{2k}{p}\right) \\
&= O(H^{\frac{k}{2}}(d^{2k}+2d^{k}d_g^{k}+2d^{k}d_f^{k})\sqrt{p}\log^{2k}{p}).
\end{align*}
This proves \eqref{Mkodd}.

Next, if $k$ is even, we use Lemma \ref{lem:SJJ} for $S(k/2,k/2)$ and Lemma \ref{lem:SJ12} for other terms. We obtain
\begin{align*}
M_k &= \frac{1}{2^k((\beta-\alpha)H)^{\frac{k}{2}}}\binom{k}{k/2}S(\frac{k}{2},\frac{k}{2})+O(2^\frac{k}{2}H^{\frac{k}{2}}(d^{2k}+2d^{k}d_g^{k}+2d^{k}d_f^{k})\sqrt{p}\log^{2k}{p}) \\
&= \frac{1}{2^k}\binom{k}{k/2}(k/2)!\abs{\CI}(1+O(k^2/H)) \\
&\qquad +O(2^{\frac{k}{2}}H^{\frac{k}{2}}(d^{2k}+2d^{k}d_g^{k}+2d^{k}d_f^{k})\sqrt{p}\log^{2k}{p}).
\end{align*}
This proves \eqref{Mkeven} and hence finished the proof of Theorem \ref{mainthm}.

\section{Proof of Corollary \ref{corGD}} \label{pfcorGD}

From Corollary \ref{maincor}, we obtain the limit
\begin{equation*}
\lim_{p\rightarrow\infty}\frac{2^{k/2}M_k}{\abs{\CI}}=\mu_k,
\end{equation*}
where
\begin{equation*}
\mu_k=\begin{cases}
1 \cdot 3 \cdot \ldots \cdot (k-1), & \text{if $k$ is even},\\
0, & \text{if $k$ is odd}.
\end{cases}
\end{equation*}
From the definition of $M_k$, this is
\begin{equation}\label{eqnmu}
\lim_{p\rightarrow\infty}\frac{1}{\abs{\CI}}\sum_{n\in\CI}(\sqrt{2}u_n)^k = \mu_k.
\end{equation}

Let $N_p(s)$ be the number of integers $n\in\CI$ such that $u_n\leq s$. Then $N_p(s)$ is a monotonic increasing step-function of $s$, with discontinuities at $s=s_1,s_2,\ldots,s_h$, say. Note that $N_p(s)=0$ if $s<-H$, and $N_p(s)=\abs{\CI}$ if $s\geq H$. Collecting together the values of $n\in\CI$ for which $u_n=s_i$ in \eqref{eqnmu}, we get (set $N_p(s_0)=0$ by convention)
\begin{equation*}
\lim_{p\rightarrow\infty}\frac{1}{\abs{\CI}}\sum_{i=1}^h(\sqrt{2}s_i)^k(N_p(s_i)-N_p(s_{i-1})) = \mu_k.
\end{equation*}
The left hand side of the above equation can be written as a Riemann-Stieltjes integral
\[\ds \text{LHS}=\int_{-\infty}^{\infty}(\sqrt{2}t)^k\,d\phi_p(t), \]
where
\[\phi_p(t)=\frac{1}{\abs{\CI}}N_p(s).\]
Set
\begin{equation*}
\phi(t)=\frac{1}{\sqrt{\pi}}\int_{-\infty}^{t}e^{-u^2}\,du,
\end{equation*}
then we have
\begin{align*}
\int_{-\infty}^{\infty}(\sqrt{2}t)^k\,d\phi(t) &=\frac{1}{\sqrt{\pi}}\int_{-\infty}^{\infty}(\sqrt{2}t)^ke^{-t^2}\,dt \\
&= \frac{1}{\sqrt{\pi}}2^{\frac{k}{2}}(1+(-1)^k)\Gamma((1+k)/2) \\
&= \mu_k.
\end{align*}
Thus
\begin{equation}\label{eqmom}
\lim_{p\rightarrow\infty}\int_{-\infty}^{\infty}(\sqrt{2}t)^k\,d\phi_p(t) = \int_{-\infty}^{\infty}(\sqrt{2}t)^k\,d\phi(t)
\end{equation}
for any $k$. Essentially by the uniqueness of the moment problem with bounded support in probability theory (see for example \cite{Fel68}), one can deduce from \eqref{eqmom} that
\[\ds \lim_{p\rightarrow\infty}\phi_p(t) = \phi(t).\]
This finishes the proof of Corollary \ref{corGD}.

\section{The case for $\chi$ trivial}

The case when $\chi$ is the trivial character is easy to settle. Our Theorem \ref{mainthm} do not make any assumptions on $g(x)$, hence it is easy to see that the theorem still hold when $\chi$ is the trivial character, if other conditions in the theorem is still assumed. Indeed, given the exponential sum
\begin{equation*}
S_n = \sum_{\substack{n<x(P_i)\leq n+H \\ y(P_i)\in\CJ}}\psi(f(P_i)),
\end{equation*}
we can form another hybrid sum
\begin{equation*}
S'_n = \sum_{\substack{n<x(P_i)\leq n+H \\ y(P_i)\in\CJ}}\chi(g(P_i))\psi(f(P_i)),
\end{equation*}
with $\chi$ being the quadratic character, and $g(x,y)=x^2$. Then $S_n$ and $S'_n$ have the same values unless there is a $P_i$ with zero $x$-coordinate, and the number of such $P_i$ is at most $\deg{(X)}$, which is much smaller than the error term in Theorem \ref{mainthm}. Now we apply the theorem to $S'_n$, and get the same conclusion for $S_n$. Since the corollaries to Theorem \ref{mainthm} does not make use of the properties of characters, they will continue to hold once Theorem \ref{mainthm} is correct. In particular, we have Gaussian distribution for the limiting distribution of the values of $S_n$ also when $\chi$ is trivial. This finishes the proof of Theorem \ref{thmtrivialchi}.

\section{The case for $\psi$ trivial}

The case when $\psi$ is the trivial character is a little bit more subtle, since Lemma \ref{lem:f} is not applicable. We return to the calculation of the moments $M_k$ in \eqref{eqnMk}, and investigate the sum $S(j_1,j_2)$ in \eqref{eqnSJ12}. If $\psi$ is trivial, then \eqref{calSJ12} becomes
\begin{equation*}
S(j_1,j_2) = \sum_{h_1=1}^H\cdots\sum_{h_{j_1+j_2}=1}^H\sum_{\substack{x\in\CI, y_1,\ldots,y_{j_1+j_2}\in\CJ \\ (x,y_1,\ldots,y_{j_1+j_2})\in X_{\CU_{\textbf{h}}}}}\chi(\tilde{g}(x,y_1,\ldots,y_{j_1+j_2})),
\end{equation*}
where
\begin{equation*}
\tilde{g}_{\textbf{h}}(x,y_1,\ldots,y_{j_1+j_2}) = \frac{g(x+h_1,y_1)\ldots g(x+h_{j_1},y_{j_1})}{g(x+h_{j_1+1},y_{j_1+1})\ldots g(x+h_{j_1+j_2},y_{j_1+j_2})}.
\end{equation*}
We recall that $y_i$ and $y_j$ stand for the same indeterminate if and only if $h_i=h_j$.

Let $a$ be the order of $\chi$. We can apply Lemma \ref{lem:perel} if $\tilde{g}$ is not a complete $a$-th power thanks to the assumptions in Theorem \ref{thmtrivialpsi} and Remark \ref{rmkperel}. From the assumption we made to $g$, and that $H$, $\deg{(g)}$ are small compared to $p$, we see that products and quotients of $g(x+h_i,y_i)$ with distinct $y_i$'s cannot be a complete $a$-th power (even when $g$ does not depend on $y$) in any irreducible component of the $x$-shifted curve $X_{\CU_{\textbf{h}}}$. Hence, if the $g(x+h_i,y_i)$'s stack together and become a complete $a$-th power, it must come from $a$ terms with the same $h_i$, or the same $h_i$ appears in both the numerator and denominator of $\tilde{g}$.

Suppose first that $j_1-j_2$ is not a multiple of $a$, then from the above discussion we see that $\tilde{g}$ can never be a complete $a$-th power. Hence we can use Lemma \ref{lem:perel} to obtain the estimate
\begin{equation}\label{eqnnotmult}
S(j_1,j_2) = O(H^{j_1+j_2}(d^{2j_1+2j_2}+2d^{j_1+j_2}d_g^{j_2}+2d^{j_1+j_2}d_f^{j_1+j_2})\sqrt{p}\log^{2j_1+2j_2}{p})
\end{equation}
for those terms.

If $j_1-j_2=ma$ for some integer $m$, then we may obtain an $a$-th power by having the same $h_i$ in the numerators and denominators, and group the remaining terms into $\abs{m}$ blocks, each block consists of $a$ terms with the same $h_i$. Note that for $a=2$ these two ways are the same since $\chi(g(x+h_i,y_i))$ agrees with its reciprocal. Fixing $j_1$ and $j_2$, it is easy to count the total number of such terms that make $\tilde{g}$ a complete $a$-th power. Letting $j=\min\{j_1,j_2\}$, this number is
\[\ds j!\frac{(\abs{m}a)!}{(a!)^{\abs{m}}\abs{m}!}H^{j+\abs{m}}(1+O(j^2/H)) \]
when $a>2$, and is
\[\ds \frac{(j_1+j_2)!}{2^{\frac{j_1+j_2}{2}}\left(\frac{j_1+j_2}{2}\right)!}H^{\frac{j_1+j_2}{2}}(1+O((j_1+j_2)^2/H)) \]
for $a=2$. Now each of the terms contribute at most
\[\abs{\CI}(\beta-\alpha)^{j+\abs{m}}+O(2^{j+\abs{m}}d^{2(j+\abs{m})}\sqrt{p}\log^{j+\abs{m}}{p}) \]
to the sum for $a>2$, and
\[\abs{\CI}(\beta-\alpha)^{\frac{j_1+j_2}{2}}+O(2^{\frac{j_1+j_2}{2}}d^{j_1+j_2}\sqrt{p}\log^{\frac{j_1+j_2}{2}}{p}) \]
to the sum for $a=2$. Hence, the sum of all terms that we cannot apply Lemma \ref{lem:perel} is
\[j!\frac{(\abs{m}a)!}{(a!)^{\abs{m}}\abs{m}!}H^{j+\abs{m}}\abs{\CI}(\beta-\alpha)^{j+\abs{m}}(1+O(j^2/H)) +O(2^{j+\abs{m}}d^{2(j+\abs{m})}\sqrt{p}\log^{j+\abs{m}}{p}) \]
for $a>2$, and is
\begin{multline*}
\frac{(j_1+j_2)!}{2^{\frac{j_1+j_2}{2}}\left(\frac{j_1+j_2}{2}\right)!}H^{\frac{j_1+j_2}{2}} \abs{\CI}(\beta-\alpha)^{\frac{j_1+j_2}{2}}(1+O((j_1+j_2)^2/H)) \\
+O(2^{\frac{j_1+j_2}{2}}d^{j_1+j_2}\sqrt{p}\log^{\frac{j_1+j_2}{2}}{p})
\end{multline*}
for $a=2$.

For the other terms, we use Lemma \ref{lem:perel}, and the contribution of these terms to the sum is
\[O(H^{2(j+\abs{m})}(d^{4(j+\abs{m})}-2d^{2(j+\abs{m})}d_{g}^{j+\abs{m}}+2d^{2(j+\abs{m})}d_{f}^{2(j+\abs{m})})\sqrt{p}(2\log{p}+1)^{2(j+\abs{m})}) \]
for $a>2$, and
\[O(H^{j_1+j_2}(d^{2(j_1+j_2)}-2d^{j_1+j_2}d_{g}^{\frac{j_1+j_2}{2}}+2d^{j_1+j_2}d_{f}^{j_1+j_2})\sqrt{p}(2\log{p}+1)^{j_1+j_2}) \]
for $a=2$.

Combining the above estimations, we finally get
\begin{multline}\label{eqnismult}
S(j_1,j_2) = j!\frac{(\abs{m}a)!}{(a!)^{\abs{m}}\abs{m}!}H^{j+\abs{m}}\abs{\CI}(\beta-\alpha)^{j+\abs{m}}(1+O(j^2/H)) +O(2^{j+\abs{m}}H^{2(j+\abs{m})} \\
\times(d^{4(j+\abs{m})}-2d^{2(j+\abs{m})}d_{g}^{j+\abs{m}}+2d^{2(j+\abs{m})}d_{f}^{2(j+\abs{m})})\sqrt{p}\log^{2(j+\abs{m})}{p}),
\end{multline}
for $j_1-j_2=ma$, $j=\min\{j_1,j_2\}$ and $a>2$, and
\begin{multline*}
S(j_1,j_2) = \frac{(j_1+j_2)!}{2^{\frac{j_1+j_2}{2}}\left(\frac{j_1+j_2}{2}\right)!}H^{\frac{j_1+j_2}{2}} \abs{\CI}(\beta-\alpha)^{\frac{j_1+j_2}{2}}(1+O((j_1+j_2)^2/H)) \\
+O(2^{\frac{j_1+j_2}{2}}H^{j_1+j_2}(d^{2(j_1+j_2)}-2d^{j_1+j_2}d_{g}^{\frac{j_1+j_2}{2}}+2d^{j_1+j_2}d_{f}^{j_1+j_2}) \\
\times\sqrt{p}(2\log{p}+1)^{j_1+j_2})
\end{multline*}
for $j_1=j_2=ma$, $a=2$. Note that $S(j_1,j_2)$ only depend on $j_1+j_2$ but not the particular $j_1,j_2$.

With the estimations for $S(j_1,j_2)$ in hand, we are ready to calculate the moments. From \eqref{eqnMk} we have
\begin{equation*}
M_k =\frac{1}{2^k((\beta-\alpha)H)^{\frac{k}{2}}}\sum^k_{j=0}\binom{k}{j}e^{(k-2j)i\theta}S(j,k-j).
\end{equation*}
There are $2$ cases according to the parity of $a$.

\subsection{The case when $a$ is even}

First suppose $a$ is even. Then if $k$ is odd, we have $j-(k-j)=2j-k$ is also odd, and thus it can never be a multiple of $a$. Every term in the above sum can then be estimated using \eqref{eqnnotmult}. We have
\begin{align*}
M_k &= O\left(\frac{1}{2^k((\beta-\alpha)H)^{\frac{k}{2}}}\sum^k_{j=0}\binom{k}{j}H^{k}(d^{2k}+2d^{k}d_g^{k-j}+2d^{k}d_f^{k})\sqrt{p}\log^{2k}{p}\right) \\
&= O(H^{\frac{k}{2}}(d^{2k}+2d^{k}d_g^{k}+2d^{k}d_f^{k})\sqrt{p}\log^{2k}{p}).
\end{align*}
This proves \eqref{2Mkodd} and \eqref{aMkodd} in Theorem \ref{thmtrivialpsi}.

If $k$ is even, then the case for $a=2$ is different from the others. If $a=2$, we have
\begin{align*}
M_k &= \frac{1}{2^k((\beta-\alpha)H)^{\frac{k}{2}}}\sum^k_{j=0}\binom{k}{j}e^{(k-2j)i\theta}S(j,k-j) \\
&= \frac{1}{2^k((\beta-\alpha)H)^{\frac{k}{2}}}\sum^k_{j=0}\binom{k}{j}e^{(k-2j)i\theta}\frac{k!}{2^{\frac{k}{2}}\left(\frac{k}{2}\right)!}H^{\frac{k}{2}} \abs{\CI}(\beta-\alpha)^{\frac{k}{2}}(1+O(k^2/H)) \\
&\qquad +O(2^{\frac{k}{2}}H^{k}(d^{2k}-2d^{k}d_{g}^{\frac{k}{2}}+2d^{k}d_{f}^{k})\sqrt{p}(2\log{p}+1)^{k}) \\
&= \frac{1}{2^k}\frac{k!}{2^{\frac{k}{2}}\left(\frac{k}{2}\right)!}\abs{\CI}\left(\sum^k_{j=0}\binom{k}{j}e^{(k-2j)i\theta}\right)(1+O(k^2/H)) \\
&\qquad +O(2^{\frac{k}{2}}H^{k}(d^{2k}-2d^{k}d_{g}^{\frac{k}{2}}+2d^{k}d_{f}^{k})\sqrt{p}(2\log{p}+1)^{k})
\end{align*}

In general we are unable to handle the sum $\sum^k_{j=0}\binom{k}{j}e^{(k-2j)i\theta}$, and we do not have Gaussian distribution for general $\theta$. See Remark \ref{rmkreal} for details.

For $\theta=0$, the above calculation of $M_k$ becomes
\begin{equation*}
M_k = \frac{k!}{2^{\frac{k}{2}}\left(\frac{k}{2}\right)!}\abs{\CI}(1+O(k^2/H)) +O(2^{\frac{k}{2}}H^{k}(d^{2k}-2d^{k}d_{g}^{\frac{k}{2}}+2d^{k}d_{f}^{k})\sqrt{p}(2\log{p}+1)^{k}).
\end{equation*}
This is \eqref{2Mkeven} in Theorem \ref{thmtrivialpsi}.

If $a>2$ (still even), then it is easy to see that $2j-k$ is a multiple of $a$ if and only if $j=k/2+m(a/2)$ for $m=-[k/a], -[k/a]+1, \ldots, [k/a]$, where $[x]$ denotes the greatest integer function. Note that for those $j$, we have $2j-k=ma$. By estimating the terms corresponding to above $j$ using \eqref{eqnismult}, and all other terms using \eqref{eqnnotmult}, we have
\begin{align*}
M_k &= \frac{1}{2^k((\beta-\alpha)H)^{\frac{k}{2}}} \sum_{m=-[\frac{k}{a}]}^{[\frac{k}{a}]}\frac{k!}{(\frac{k}{2}+\frac{a\abs{m}}{2})!} \frac{(\abs{m}a)!}{(a!)^{\abs{m}}}\abs{\CI}(H(\beta-\alpha))^{(\frac{k}{2}-a\frac{\abs{m}}{2})+\abs{m}} \\
&\qquad\qquad \times(1+O(k^2/H)) +O(H^{\frac{3k}{2}}(d^{4k}-2d^{2k}d_{g}^{k}+2d^{2k}d_{f}^{2k})\sqrt{p}\log^{2k}{p}) \\
&= \frac{1}{2^k}\frac{k!}{(k/2)!}\abs{\CI} (1+O(k^{\frac{a}{2}+2}/H))+O(H^{\frac{3k}{2}}(d^{4k}-2d^{2k}d_{g}^{k}+2d^{2k}d_{f}^{2k})\sqrt{p}\log^{2k}{p}).
\end{align*}
This is \eqref{aMkeven}.

\subsection{The case when $a$ is odd}

Let $a$ be an odd integer, $a>1$. Again there are two cases according to the parity of $k$. First assume $k$ is even, then $2j-k$ is even, and since $a$ is odd, $2j-k$ will be a multiple of $a$ if and only if it is a multiple of $2a$. Therefore, the result here is the same as the case where the order of $\chi$ is $2a$. We have
\begin{multline*}
M_k = \frac{1}{2^k}\frac{k!}{(k/2)!}\abs{\CI} (1+O(k^{\frac{a}{2}+2}/H)) \\
+O(H^{\frac{3k}{2}}(d^{4k}-2d^{2k}d_{g}^{k}+2d^{2k}d_{f}^{2k})\sqrt{p}\log^{2k}{p}).
\end{multline*}
This gives \eqref{oMkeven}.

Now if $k$ is odd, then if $2j-k=ma$, $m$ must be odd. Similar to the calculation in the case $a$ even and $a>2$, we see that the main terms of $M_k$ correspond to $j=\frac{k}{2}+\frac{a}{2}$ and $j=\frac{k}{2}-\frac{a}{2}$. We have
\begin{align*}
M_k &= \frac{1}{2^k((\beta-\alpha)H)^{\frac{k}{2}}}\binom{k}{\frac{k}{2}-\frac{a}{2}}(e^{ia\theta}+e^{-ia\theta}) \left(\frac{k}{2}-\frac{a}{2}\right)!H^{\frac{k}{2}-\frac{a}{2}+1}\abs{\CI}(\beta-\alpha)^{\frac{k}{2}-\frac{a}{2}+1} \\
&\qquad \times (1+O(k^{a+2}/H))+O((H^{\frac{3k}{2}}(d^{4k}-2d^{2k}d_{g}^{k}+2d^{2k}d_{f}^{2k})\sqrt{p}\log^{2k}{p}) \\
&= \frac{1}{2^k((\beta-\alpha)H)^{\frac{a}{2}-1}}\frac{k!}{\left(\frac{k}{2}+\frac{a}{2}\right)!}(2\cos{a\theta})\abs{\CI}(1+O(k^{a+2}/H)) \\
&\qquad +O((H^{\frac{3k}{2}}(d^{4k}-2d^{2k}d_{g}^{k}+2d^{2k}d_{f}^{2k})\sqrt{p}\log^{2k}{p}).
\end{align*}
This proves \eqref{oMkodd} and finishes the proof of Theorem \ref{thmtrivialpsi}.


\begin{thebibliography}{10}

\bibitem{Bom66}
E.~Bombieri, \emph{On exponential sums in finite fields}, Amer. J. Math.
  \textbf{88} (1966), no.~1, 71--105.

\bibitem{CCZ03}
O-Yeat Chan, Geumlan Choi, and Alexandru Zaharescu, \emph{A multidimensional
  version of a result of {D}avenport-{E}rd{\H o}s}, J. Integer Seq. \textbf{6}
  (2003), no.~2, Article 03.2.6, 9 pp. (electronic).

\bibitem{DaEr52}
H.~Davenport and P.~Erd{\"o}s, \emph{The distribution of quadratic and higher
  residues}, Publ. Math. Debrecen \textbf{2} (1952), 252--265.

\bibitem{ErKa40}
P.~Erd{\"o}s and M.~Kac, \emph{The {G}aussian law of errors in the theory of
  additive number theoretic functions}, Amer. J. Math. \textbf{62} (1940),
  738--742.

\bibitem{EMS87}
P.~Erd{\"{o}}s, H.~Maier, and A.~S{\'a}rk{\"o}zy, \emph{On the distribution of
  the number of prime factors of sums {$a+b$}}, Trans. Amer. Math. Soc.
  \textbf{302} (1987), no.~1, 269--280.

\bibitem{ErPo85}
Paul Erd{\H{o}}s and Carl Pomerance, \emph{On the normal number of prime
  factors of {$\phi(n)$}}, Rocky Mountain J. Math. \textbf{15} (1985), no.~2,
  343--352, Number theory (Winnipeg, Man., 1983).

\bibitem{Fel68}
William Feller, \emph{An introduction to probability theory and its
  applications}, 2nd ed., vol.~2, Wiley, 1971.

\bibitem{Fuj85}
Masahiko Fujiwara, \emph{Distribution of rational points on varieties over
  finite fields}, Mathematika \textbf{35} (1988), no.~2, 155--171.

\bibitem{GrSo07}
Andrew Granville and K.~Soundararajan, \emph{Sieving and the {E}rd{\H o}s-{K}ac
  theorem}, Equidistribution in number theory, an introduction, NATO Sci. Ser.
  II Math. Phys. Chem., vol. 237, Springer, Dordrecht, 2007, pp.~15--27.

\bibitem{KuLi09}
Wentang Kuo and Yu-Ru Liu, \emph{Gaussian laws on {D}rinfeld modules}, Int. J.
  Number Theory \textbf{5} (2009), no.~7, 1179--1203.

\bibitem{Liu06}
Yu-Ru Liu, \emph{Prime analogues of the {E}rd{\H o}s-{K}ac theorem for elliptic
  curves}, J. Number Theory \textbf{119} (2006), no.~2, 155--170.

\bibitem{MaZa10}
K.-H. Mak and A.~Zaharescu, \emph{Poisson type phenomena for points on
  hyperelliptic curves modulo {$p$}}, Funct. Approx. Comment. Math. \textbf{47} (2012),
  no.~1, 65--78.

\bibitem{Mey81}
Gerald Myerson, \emph{The distribution of rational points on varieties defined
  over a finite field}, Mathematika \textbf{28} (1981), no.~2, 153--159 (1982).

\bibitem{Per69}
G.~I. Perel{'}muter, \emph{Estimation of a sum along an algebraic curve}, Mat.
  Zametki \textbf{5} (1969), 373--380.

\bibitem{Wei48}
Andr{\'e} Weil, \emph{On some exponential sums}, Proc. Nat. Acad. Sci. U. S. A.
  \textbf{34} (1948), 204--207.

\bibitem{Xio09}
Maosheng Xiong, \emph{The {E}rd{\H o}s-{K}ac theorem for polynomials of several
  variables}, Proc. Amer. Math. Soc. \textbf{137} (2009), no.~8, 2601--2608.

\bibitem{XiZa08}
Maosheng Xiong and Alexandru Zaharescu, \emph{Distribution of {S}elmer groups
  of quadratic twists of a family of elliptic curves}, Adv. Math. \textbf{219}
  (2008), no.~2, 523--553.

\bibitem{XiZa09}
\bysame, \emph{Selmer groups and {T}ate-{S}hafarevich groups for the congruent
  number problem}, Comment. Math. Helv. \textbf{84} (2009), no.~1, 21--56.

\end{thebibliography}

\end{document}